\newtheorem{exam}{Example}[section]
\newtheorem{rem}{Remark}[section]
\title{An SDP Approach For Solving Quadratic Fractional
Programming Problems
\thanks{This research was supported by Taiwan National Science Council under
grant 102-2115-M-006-010, by National Center for Theoretical
Sciences (South), by National Natural Science Foundation of China
under grants 11001006 and 91130019/A011702, and by the fund of State
Key Laboratory of Software Development Environment under grant
SKLSDE-2013ZX-13} }
\author{Van-Bong Nguyen\footnotemark[2]  \and Ruey-Lin Sheu\footnotemark[3]~\footnotemark[4]
\and Yong  Xia\footnotemark[5]}
\begin{document}
\maketitle
\renewcommand{\thefootnote}{\fnsymbol{footnote}}

\footnotetext[2]{Department of Mathematics, National Cheng Kung
University, Taiwan ({\tt
bongtnmath@yahoo.com.vn}).}

\footnotetext[3]{Department of Mathematics, National Cheng Kung
University, Taiwan ({\tt
rsheu@mail.ncku.edu.tw}).}

\footnotetext[4]{Corresponding author.}

\footnotetext[5]{State Key Laboratory of Software Development
              Environment, LMIB of the Ministry of Education, School of
Mathematics and System Sciences, Beihang University, Beijing,
100191, P. R. China ({\tt dearyxia@gmail.com}).}

\begin{abstract}
This paper considers a fractional programming problem (P) which
minimizes a ratio of quadratic functions subject to a two-sided
quadratic constraint. As is well-known, the fractional objective
function can be replaced by a parametric family of quadratic
functions, which makes (P) highly related to, but more difficult
than a single quadratic programming problem subject to a similar
constraint set. The task is to find the optimal parameter
$\lambda^*$ and then look for the optimal solution if $\lambda^*$ is
attained. Contrasted with the classical Dinkelbach method that
iterates over the parameter, we propose a suitable constraint
qualification under which a new version of the S-lemma with an
equality can be proved so as to compute $\lambda^*$ directly via an
exact SDP relaxation. When the constraint set of (P) is degenerated
to become an one-sided inequality, the same SDP approach can be
applied to solve (P) {\it without any condition}. We observe that
the difference between a two-sided problem and an one-sided problem
lies in the fact that the S-lemma with an equality does not have a
natural Slater point to hold, which makes the former essentially
more difficult than the latter. This work does not, either, assume
the existence of a positive-definite linear combination of the
quadratic terms (also known as the dual Slater condition, or a
positive-definite matrix pencil), our result thus provides a novel
extension to the so-called ``hard case'' of the generalized trust
region subproblem subject to the upper and the lower level set of a
quadratic function.
\end{abstract}
\begin{keywords} Quadratic fractional programming; Dinkelbach algorithm;
Nonconvex quadratic programming; S-lemma; Semidefinite relaxation;
Slater point; positive-definite matrix pencil; generalized trust
region subproblem
\end{keywords}

\begin{AMS}
90C09, 90C10, 90C20
\end{AMS}

\pagestyle{myheadings}
\thispagestyle{plain}
\markboth{V.B. Nguyen, R.L. Sheu and Y. Xia}{An SDP approach for solving quadratic fractional
programming problems}

\section{Introduction}
\label{sec:1}
In this paper we study a single ratio quadratic fractional programming problem
taking the following format:
\begin{equation}\label{P}
{\rm (P)}~\lambda^*=\inf \left\{\frac{f_1(x)}{f_2(x)} : x\in X \right\}
\end{equation}
where $X=\{x\in \Bbb R^n: u\le g(x)\le v\};$
$f_i(x)=x^TA_ix+2b_i^Tx+c_i,\ i=1,2;\  g(x)=x^TBx+2d^Tx+\alpha.$ The
matrices  $A_1, A_2, B$ are assumed to be symmetric and can be
indefinite, $u\in \Bbb R\cup\{-\infty\}, v\in \Bbb R\cup\{+\infty\}$
such that $X\ne\emptyset.$ To avoid the denominator becoming 0, we
call (P) {\it well-defined} if $f_2(x)>0$ for all $x\in X.$ In this
paper, we only consider a well-defined (P), but characterize
conditions under which (P) can be well-defined in the last section.
Denote $x^*$ to be the optimal solution of (P) if it is attained,
and $\lambda^*$ the infimum of the problem, which could be $-\infty$
when (P) is unbounded below. By setting $f_2(x)=1$, problem (P) is
reduced to the ``interval bounded generalized trust region
subproblem (I-GTRS)'' \cite{Pong} which is essentially a quadratic
programming problem with two quadratic constraints (QP2QC). Problem
(I-GTRS) was studied in \cite{SW} by Stern and Wolkowicz for a
homogenous $g(x)$; in \cite{Y} by Ye and Zhang under a primal and
dual Slater condition; and in \cite{Pong} by Pong and Wolkowicz for
a necessary and sufficient optimality condition with an algorithm
solving the ``regular case'' (to be explained later). Due to the
fractional structure in the objective, (P) is in general more
difficult than (I-GTRS).

As is well-known, the fractional objective function can be replaced
by a parametric family of quadratic functions. Dinkelbach
\cite{Dink} in 1967 proposed a family of subproblems
parameterized by $\lambda:$
\begin{equation}\label{P_lambda}
{\rm (P)}_\lambda\hspace*{1cm}f(\lambda)=\inf \left\{ f_1(x)-\lambda
f_2(x) : x\in X \right\}
\end{equation}
and developed an iterative algorithm on $\lambda$ to find a value
$\lambda_0$ such that $f(\lambda_0)=0.$ When $X$ is compact, it was
shown that $\lambda_0=\lambda^*$. Moreover, (P) and
(P)$_{\lambda_0}$ share the same optimal solution set
\cite{Dink,Iba,ALi}. Applying the Dinkelbach method to solve (P)
amounts to solving {\it globally} a sequence of (I-GTRS)'s. Each
(I-GTRS) $(P)_\lambda$ could be unbounded below or unattainable.
Otherwise, under the primal Slater condition:\\
\ \\
{\bf Assumption A}\hspace*{1.5cm} $\inf_{x\in \Bbb R^n} g(x)<u\le
v<\sup_{x\in \Bbb R^n} g(x),
$\\
\ \\
a global optimal solution $x(\lambda)$ to $(P)_\lambda$ can be
characterized with a Lagrange multiplier $\mu(\lambda)$ such that
the first order condition $(A_1-\lambda
A_2-\mu(\lambda)B)x(\lambda)=-b_1+\lambda b_2 +\mu(\lambda)d$; the
second order condition $A_1-\lambda A_2-\mu(\lambda)B\succeq0$;
together with the complementarity become necessary and sufficient
\cite{Pong}. The real task is to find algorithmically the pair of
saddle point $(x(\lambda),\mu(\lambda))$ for each $\lambda$, suppose
they exist, from the set of optimality conditions. So far, existing
methods such as SDP with a rank one decomposition procedure \cite{Y}
or a matrix pencil secular function approach \cite{Pong} must rely
on the existence of a positive definite matrix pencil $A_1-\lambda
A_2 - \mu B\succ0$ for some $\mu\in R$. This is also known to be the
dual Slater condition \cite{Y}, the stability condition \cite{MJ},
or the ``regular (ease)'' case \cite{SW,Pong}. We notice that, while
the primal Slater condition is quite natural and easy to satisfy,
the dual Slater condition is very strict. A sufficient condition for
the dual Slater condition is that at least one of the matrices
$A_1-\lambda A_2$ and $B$ is positive definite. A necessary
condition is that $A_1-\lambda A_2$ and $B$ can be simultaneously
diagonalizable via congruence (SDC). Namely, there exists a
nonsingular matrix $C$ (depending on $\lambda$) such that both
matrices $C^T(A_1-\lambda A_2)C$ and $C^TBC$ are diagonal.
Therefore, assuming the dual Slater condition for each $(P)_\lambda$
is impractical. Nevertheless, there were some papers which solve
quadratically constrained quadratic fractional problem using the
iterative method. For example, Beck et al. \cite{ABT} considered a
special case of (P) with $g(x)=x^T Bx,~B\succ 0$, $u\ge0,~v>0$.
Zhang and Hayashi \cite{ALi} studied a CDT-type quadratic fractional
problem subject to two quadratic constraints, one of which is a
ball, by an iterative generalized Newton method for finding
$f(\lambda_0)=0.$

On the other hand, $\lambda^*$ could be directly computed via an
exact semi-definite reformulation (SDR), rather than iteratively. In
particular, Beck and Teboulle \cite{ASY} considered an one-sided
homogeneous constrained quadratic problem below:
\begin{equation}\label{RQ}
{\rm (RQ)}~\inf \left\{ \frac{f_1(x)}{f_2(x)} : ||Lx||^2\le \rho \right\}
\end{equation}
where $L\in \Bbb R^{r\times n}$ is a full row rank matrix and
$\rho>0.$ Under some technical conditions, Problem (RQ) was shown to
possess a ``hidden convexity'' that it admits an exact SDR.
Therefore, the optimal value $\lambda^*$ can be evaluated in a
polynomial time. The result was later strengthened in \cite{AOS} by
Xia that the (RQ) problem indeed admits an exact SDR without any
condition. Moreover, it is attained if and only if the associated
SDR (\ref{SDP}) has a unique solution. Unfortunately, problems
beyond (RQ) are more complicate. An exact SDR is in general not
available for (P) even when $||Lx||^2\le \rho$ is relaxed to become
a convex nonhomogeneous constraint $g(x)=x^TBx+2d^Tx+\alpha.$ See
Example \ref{BB} in Sect. \ref{sec:3} for an explanation.

Later, Beck and Teboulle proposed a framework that minimizes the
ratio of two quadratic functions over $m$ quadratic inequalities
\cite{BT}:
\begin{equation}\label{QCRQ}
{\rm (QCRQ)}~\inf_{x\in \Bbb R^n} \left\{
\frac{f_1(x)}{f_2(x)} : g_i(x)=x^TB_ix+2d_i^Tx+\alpha_i\le 0,
i=1,2,..,m  \right\}.
\end{equation}
%
It covers quadratically constrained quadratic programming (QPQC) as
a special case. It is known that (QPQC) is NP-hard and there is no
surprise that an even more generic (QCRQ) can be studied only under
very restrictive situations. Based on the homogenization technique,
(QCRQ) can be made homogeneous by substituting
$x=\frac{y}{t},~t\not=0:$
\begin{align}\label{1}
\inf \left\{
\frac{f_1^H(y,t)}{f_2^H(y,t)} : g_i^H(y,t)\le 0, i=1,2,..,m  , t\ne0\right\},
\end{align}
where $f_1^H(y,t), f_2^H(y,t), g_i^H(y,t),~i=1,2,...,m$ are
homogeneous versions of $f_1(x),$ $f_2(x), g_i(x), i=1,2,...,m,$
respectively. Notice that the homogenization yields Problem
(\ref{1}) which is valid only for $t\not=0$, but the non-triviality
occurs normally in the case $t=0$ when homogenizing a quadratic
system. Beck and Teboulle further relaxed $t\ne0$ to be
$(y,t)\ne(0,0)$ and considered a slightly different ``mutated''
problem
\begin{align}\label{2}
\inf_{(y,t)\ne(0,0)} \left\{
\frac{f_1^H(y,t)}{f_2^H(y,t)} : g_i^H(y,t)\le 0, i=1,2,..,m  \right\},
\end{align}
By imposing $f_2^H(y,t)=1,$ (\ref{2}) was proven to be equivalent to
the following non-fractional problem:
\begin{equation}\label{3}
(H)\ \ \
\begin{array}{lll}
v(H)=&\min& f_1^H(z,s)\\
&\text{s.t. } &  f_2^H(z,s)=1\\
& &g_i^H(z,s)\le 0
\end{array}
\end{equation}
where $v(\cdot)$ denotes the optimal value of the problem $(\cdot).$
Restricting $s=0$ in (\ref{3}), a related problem $(H_0)$ is used as
a reference to be compared with $(H)$:
\begin{equation}\label{4} (H_0)\ \ \ \
\begin{array}{lll}
v(H_0)=&\min& f_1^H(z,0)\\
&\text{s.t. } &  f_2^H(z,0)=1\\
& &g_i^H(z,0)\le 0.
\end{array}
\end{equation}
Then, (QCRQ) was shown to have a tight semi-definite relaxation
under the following three conditions:
\begin{equation}\label{bt-1}
\left(\begin{matrix}A_2&b_2\\b_2^T&c_2\end{matrix}\right)\succ0;
\end{equation}
\begin{equation}\label{bt-2}
v(H) < v(H_0);
\end{equation}
\begin{equation}\label{bt-3}
\hbox{The semi-definite relaxation admits a rank-one optimal solution.}
\end{equation}

As we shall see later, the three assumptions
(\ref{bt-1})-(\ref{bt-3}) put (QCRQ) in a very rigid class. In Sect.
\ref{sec:2.3}, we provide two examples of (P), Examples
\ref{example2} and \ref{VD}, which violate at least (\ref{bt-1}) and
(\ref{bt-2}) but can be solved by our method. The drawback of the
direct method for finding $\lambda^*$ ``once for all'' lies on the
fact that there are not too many special cases of (P) that possess a
hidden convexity. More sophisticated analysis is often necessary.

Our idea to compute $\lambda^*$ relies on a new S-Lemma. See Sect.
\ref{sec:2.2} Theorem \ref{L3}. When the optimal solution $x^*$ is
an interior point of $X=\{u\le g(x)\le v\}$, the case is somehow
simple and we show that $\lambda^*$ can be computed by an SDP. See
Sect. \ref{sec:2.1} Theorem \ref{case1}. Otherwise, $x^*$ resides on
one of the two boundaries satisfying $g(x^*)=u$ or $g(x^*)=v$. In
either case, to find $\lambda^*$, one faces a parametric family of
one equality-constrained quadratic programming problems (\ref{b1}).
By a coordinate change, we need to only consider $h(x)=x^T Bx+2d^T
x=0$. Then, we can compute $\lambda^*$ also by an SDP provided the
family (\ref{b1}) can be converted to the other one (\ref{b2}):
\begin{align}
\lambda^*&=\inf_{x\in \Bbb R^n}\left\{\frac{f_1(x)}{f_2(x)}| h(x)=0\right\}\nonumber\\
&=\sup\left\{\lambda:\Big\{x\in \Bbb
R^n|\lambda>\frac{f_1(x)}{f_2(x)},  h(x)=0\Big\}=\emptyset\right\}\nonumber\\
&=\sup\left\{\lambda:\Big\{x\in \Bbb R^n|f_1(x)-\lambda
f_2(x)<0, h(x)=0\Big\}=\emptyset\right\}\label{b1}\\
&=\sup\left\{\lambda:f_1(x)-\lambda
f_2(x)+\mu h(x)\ge0,\forall x\in \Bbb R^n,\mu\in \Bbb R\right\}\label{b2}\\
&=\sup_{\lambda,\mu\in \Bbb R}
\left\{\lambda:\left(\begin{matrix}A_1-\lambda A_2+\mu B&b_1-\lambda b_2+\mu d\label{b3}\\
b_1^T-\lambda b_2^T+\mu d^T& c_1-\lambda c_2
\end{matrix}\right)\succeq0\right\}.
\end{align}
The equivalence of (\ref{b1}) and (\ref{b2}) is indeed a very strong
statement since it requires the S-lemma of the equality version to
hold {\it for every parameter} $\lambda$. Moreover, since $h(x)=0$
cannot have any Slater point, this variant of S-Lemma is more
difficult to obtain than the inequality version with $h(x)\le0$. We
show that, under the following constraint qualification for equality
constraint:
\\
\ \\
{\bf Assumption B}\hspace*{0.5cm}There exists $\zeta\in X=\{x\in
\Bbb R^n: h(x)=0\}$ such that
$$
 x^TBx=0 \Rightarrow (B\zeta+d)^Tx=0,
$$
(\ref{b1}) and (\ref{b2}) can be made equivalence. In addition,
Assumption B has an important feature that it relates merely to
$g(x)$ (or $h(x)$), not to the parametric family of functions
$f_1(x)-\lambda f_2(x)$. In contrast, the extended Finsler's theorem
[\cite{BE},Thm A.2] can not apply as it requires a condition (in our
format and notations)
\begin{align}\label{ss11}
A_1-\lambda A_2\succeq\eta B,~for~some\ \eta\in R
\end{align}
in which $\eta$ might vary for different $\lambda$'s. We provide an
example, Sect. \ref{sec:2.2} Remark (3.4), which can be solved by
our extended S-lemma, while there is no $\eta$ satisfying
(\ref{ss11}) right at the optimal value $\lambda^*$. Assumption B is
also more general than a condition imposed in \cite{D} Prop. 3.1,
where $h(x)$ was assumed to be strictly convex or strictly concave.
Compared with the dual Slater condition, Assumption B is easier to
obtain. For example, if $A_1-\lambda A_2=0$ and $B$ is positive
semidefinite, singular and $d$ is in the range of $B$, then
Assumption B can be satisfied while the dual Slater condition is
obviously violated. Consequently, some hard cases of (I-GTRS) that
can not be solved due to lack of a positive definite matrix pencil
can now be done under Assumption B.

The paper is organized as follows. In Section \ref{sec:2}, we study
Problem (P) under Assumption A. The first step of our
algorithm tries to determine whether the optimal solution $x^*$
could lie in the interior of $X$, followed by checks on both
boundaries otherwise. For each inspection, we use an SDP to compute a
potential $\lambda^*$ and then verify whether $f(\lambda^*)=0$ by
solving a (constrained) quadratic programming problem
$(P)_{\lambda^*}$. We show that (P) can be solved in polynomial time
under the constraint qualification Assumption B, which is
independent of the usual primal and dual Slater conditions. In
Section \ref{sec:3}, the one-sided (P) for which Assumption A is violated is treated. Our result is that the one-sided (P) can
be completely solved in polynomial time without any condition. An
interesting comparison between the two-sided original (P) and the
one-sided case  is elaborated in Remark
\ref{comparision}. The (RQ) problem (\ref{RQ}) as a special case of
the one-sided (P) can now be resolved without any technical
conditions. In Section \ref{sec:4}, we characterize conditions for
the ultimate assumption of (P) that the denominator function
$f_2(x)>0$ on $X$ such that (P) is well-defined. It turns out the
well-definedness property can be related to simultaneous
diagonalization via congruence. The final section concludes the
paper.

\section{Quadratic Fractional programming problem with two-sided quadratic inequality constraint}
\label{sec:2} In this section, we first characterize conditions
under which (P) is bounded from below and under which (P) can be
attained. Then, we show how to compute $\lambda^*$ using a
semi-definite programming approach. Some difficult cases of (P) are
resolved with the help of a new version of S-Lemma under Assumption
B, which is more powerful than the primal/dual Slater condition; a
similar result in \cite{D} Prop. 3.1; and the extended Finsler's
theorem [\cite{BE},Thm A.2]. Examples are given to illustrate all
the ideas.

\subsection{Boundedness, attainment, and unconstrained cases}\label{sec:2.1}
In fractional programming, it is often assumed that the feasible set
$X$ is compact. In general, a well-defined (P) is not necessarily
bounded from below and can not be always attained. The following two
lemmas, generalizing some basic results in fractional programming,
characterize completely the boundedness and the attainment
properties of (P) without the compactness assumption. We omit the
proof as the original compactness assumption was only used to
guarantee that the optimal value of (P) is attained and each
iteration of the Dinkelbach method is defined. The reader can refer
to Dinkelbach's original proof \cite{Dink} or a more general
discussion on a multi-ratios case. See for example
\cite{Crou,CroFer,Ber,Bar,Ju}.
\begin{lemma}[The boundedness problem]\label{B1} Suppose that (P) is well defined.
It is bounded below if and only if there exists a $\bar{\lambda}\in
\Bbb R$ such that $f(\bar{\lambda})\ge 0.$ Furthermore, if
$\lambda^*>-\infty,$ then
$$\lambda^*=\max_{f(\lambda)\ge0}\lambda.$$
\end{lemma}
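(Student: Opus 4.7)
The proof plan hinges entirely on the well-definedness hypothesis $f_2(x)>0$ on $X$, which makes the inequalities $f_1(x)/f_2(x)\ge\lambda$ and $f_1(x)-\lambda f_2(x)\ge 0$ logically equivalent for every $x\in X$ and every $\lambda\in\mathbb R$. I would open with this simple observation, since both directions of the equivalence and the ``furthermore'' clause will be obtained by applying it pointwise and then taking an infimum.

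For the forward direction, I would assume $\lambda^*>-\infty$, so that by definition $f_1(x)/f_2(x)\ge\lambda^*$ for every $x\in X$. Multiplying by $f_2(x)>0$ yields $f_1(x)-\lambda^* f_2(x)\ge 0$ on $X$, and taking the infimum over $x\in X$ gives $f(\lambda^*)\ge 0$. This produces the required $\bar\lambda=\lambda^*$.

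For the reverse direction, assume $f(\bar\lambda)\ge 0$ for some $\bar\lambda\in\mathbb R$. Then $f_1(x)-\bar\lambda f_2(x)\ge 0$ for every $x\in X$, so dividing by $f_2(x)>0$ gives $f_1(x)/f_2(x)\ge\bar\lambda$. Taking infimum yields $\lambda^*\ge\bar\lambda>-\infty$, so (P) is bounded below. Combining the two directions, whenever $\lambda^*>-\infty$, we have $f(\lambda^*)\ge 0$ so $\lambda^*\in\{\lambda:f(\lambda)\ge 0\}$, while any $\lambda$ in this set satisfies $\lambda\le\lambda^*$ by the same reverse argument; hence $\lambda^*=\max_{f(\lambda)\ge 0}\lambda$, and the maximum is actually attained.

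I do not anticipate a real obstacle here: the lemma is essentially a tautology once the sign of $f_2$ is exploited, which is exactly why the authors chose to omit the proof and refer to the classical Dinkelbach-type references. The only mild subtlety is to be careful that $\max$ (rather than $\sup$) is justified, which follows because $\lambda^*$ itself lies in the set $\{\lambda : f(\lambda)\ge 0\}$ and is an upper bound for it.
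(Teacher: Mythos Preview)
Your proposal is correct and is exactly the standard Dinkelbach-type argument that the paper alludes to; the authors explicitly omit the proof and refer the reader to Dinkelbach's original work, and your pointwise manipulation using $f_2(x)>0$ followed by taking infima is precisely that classical reasoning. Your care in noting that the $\max$ is genuinely attained (because $\lambda^*$ itself belongs to the set) is the only detail beyond a pure tautology, and you handled it.
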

%

The following Example \ref{EX3} shows that it is possible for a
bounded (P) to have $f(\lambda^*)>0$, in which case (P) is
unattainable. That is, the optimal value $\lambda^*$ can not be
attained.

\begin{exam}\label{EX3} It is easy to check
$$\lambda^*= \inf_{x\in \Bbb
R^3}\left\{\frac{x_1^2+1}{x_2^2+1}:g(x)=x_1^2+2x_3-1\le
0\right\}=0$$ by letting $x_1=0$ and $x_2$ go to infinity.
Solving its parametric problem
\begin{eqnarray*}
\begin{array}{ll}
f(\lambda)&=\inf_{x\in \Bbb R^3}
\Big\{x_1^2+1-\lambda(x_2^2+1):g(x)=x_1^2+2x_3-1\le 0 \Big\}\\
&=\left\{\begin{array}{ll}1-\lambda, &\text{ if  } \lambda\le0\\
-\infty, &\text{ if } \lambda >0\end{array},\right.
\end{array}
\end{eqnarray*}
we observe that $f(\lambda^*)=f(0)=1>0.$
\end{exam}

\begin{lemma}[The attainment problem]\label{B2}
Suppose that (P) is well defined. Then, $\lambda^*=v(P)$ is attained
at $x^*\in X$ if and only if $\lambda^*$ is a root of $f(\lambda)=0$
and $x^*$ is an optimal solution to $\rm{(P)}_{\lambda^*}.$
\end{lemma}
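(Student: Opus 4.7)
The plan is to reduce the statement to elementary manipulations enabled by well-definedness of (P), that is, by $f_2(x)>0$ for every $x\in X$. This positivity is what allows us to freely multiply and divide inequalities by $f_2(x)$ without flipping signs, and it is the only nontrivial ingredient one really needs beyond basic calculus of infima.

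For the ``if'' direction, I would assume that $f(\lambda^*)=0$ and that $x^*$ attains the minimum in $(P)_{\lambda^*}$. Then by definition $f_1(x^*)-\lambda^* f_2(x^*)=f(\lambda^*)=0$, and since $f_2(x^*)>0$ this rearranges to $f_1(x^*)/f_2(x^*)=\lambda^*$. To show $x^*$ is also optimal for (P), I would take an arbitrary $x\in X$, use the global optimality of $x^*$ in $(P)_{\lambda^*}$ to get $f_1(x)-\lambda^* f_2(x)\ge f(\lambda^*)=0$, and then divide by the positive quantity $f_2(x)$ to conclude $f_1(x)/f_2(x)\ge \lambda^*$. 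This shows $\lambda^*$ is indeed the infimum and it is attained at $x^*$.

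For the ``only if'' direction, I would assume $\lambda^*=v({\rm P})$ is attained at $x^*\in X$, so $f_1(x^*)/f_2(x^*)=\lambda^*$ and $f_1(x)/f_2(x)\ge\lambda^*$ for every $x\in X$. Multiplying the equality by the positive $f_2(x^*)$ gives $f_1(x^*)-\lambda^* f_2(x^*)=0$, and multiplying each of the inequalities by the positive $f_2(x)$ gives $f_1(x)-\lambda^* f_2(x)\ge 0$ on all of $X$. Together these say $f(\lambda^*)=0$ with the infimum attained at $x^*$, which is exactly what is claimed.

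There is no real obstacle here; the only thing one must be slightly careful about is invoking well-definedness consistently so as to justify the multiplication/division by $f_2$, and to ensure that $f(\lambda^*)$ is finite rather than $-\infty$ (which is automatic once an $x^*\in X$ produces the value $0$ on a quantity known to be bounded below by $0$ on $X$). Because the argument is a routine consequence of positivity of $f_2$ and the definitions of $f(\lambda)$ and $\lambda^*$, the lemma can be stated without reproducing the proof, exactly as the authors have done.
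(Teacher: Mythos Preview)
Your argument is correct and is precisely the standard Dinkelbach-type equivalence the paper alludes to when it omits the proof and refers the reader to \cite{Dink} and related references. There is nothing to add: the positivity of $f_2$ on $X$ is exactly what justifies the multiplications and divisions you perform, and both directions go through as you describe.
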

%

\begin{rem}\label{rem1}
In fact, Lemmas \ref{B1} and \ref{B2} hold for any well-defined
fractional programming problem where the ratio of functions are not
necessary to be quadratics, and the constraint set $X$ can be
arbitrary.
\end{rem}

\begin{rem}\label{rem1-1}
Due to Lemma \ref{B2}, we can freely exchange and mention the two
types of problems: either (P) or $(P)_{\lambda^*}$ with
$f(\lambda^*)=0$.
\end{rem}

In the following until the end of the section, we assume that
problem (P) is always attained and satisfies Assumption A. All other
cases not satisfying this assumption can be treated separately. When
$u\le\inf_{x\in \Bbb R^n}g(x),$ the constraint $u\le g(x)\le v$ is
reduced to $g(x)\le v.$ Similarly, if $\sup_{x\in \Bbb R^n}g(x)\le
v,$ then $u\le g(x)\le v$ becomes just $u\le g(x).$ The two cases
where $g(x)\le v$ or $u\le g(x)$ will be studied in next section.

If $u\le \inf g(x)\le\sup g(x)\le v,$ problem (P) is an
unconstrained quadratic fractional programming problem. According to
Lemma \ref{B1}, the optimal value can be computed directly by
\begin{eqnarray*}
\begin{array}{ll}
\lambda^*&=\max\limits_{f(\lambda)\ge0}\lambda\\
&=\max\{\lambda\in \Bbb R: \inf_{x\in \Bbb R^n} f_1(x)-\lambda f_2(x)\ge0\}\\
&=\max\{\lambda\in \Bbb R: f_1(x)-\lambda f_2(x)\ge0,~\forall x\in \Bbb R^n\}\\
&=\max\left\{\lambda\in \Bbb
R: \left(\begin{matrix} A_1-\lambda A_2&
b_1-\lambda b_2\\
b_1^T-\lambda b_2^T&c_1-\lambda
c_2\end{matrix}\right)\succeq0\right\}.
\end{array}
\end{eqnarray*}
It indicates that an unconstrained (P), if not unbounded below, must
be equivalent to the convex unconstrained problem: $\inf_{x\in \Bbb
R^n} f_1(x)-\lambda^*f_2(x)$. If (P) is attained, from Lemma
\ref{B2} the optimal solution $x^*$ can be also found by solving
$(P)_{\lambda^*}$.

Now we elaborate how to solve (P) under Assumption A. First notice
that the optimal solution $x^*$ of $\rm{(P)}_{\lambda^*}$ will be
either an interior point of $X=\{u\le g(x)\le v\}$, or resides on
one of the two boundaries satisfying $g(x^*)=u$ or $g(x^*)=v$. By
the first order and the second order necessary conditions, $x^*$ is
an interior point only when $A_1-\lambda^*A_2\succeq0$ and also
satisfies $(A_1-\lambda^*A_2)x^*=-(b_1-\lambda^*b_2)$. Therefore,
problem (P) can be analyzed by the following three (possibly overlapped) cases.\\
{\bf Case 1.} $f(\lambda^*)=0$, \emph{ $A_1-\lambda^*A_2\succeq0$
and
\begin{equation}\label{cm1}{x^*}\in S=\{x\in \Bbb
R^n|(A_1-\lambda^*A_2)x=-(b_1-\lambda^*b_2)\}\end{equation}
satisfying $u\le g(x^*)\le v.$}\\
{\bf Case 2.} $f(\lambda^*)=0$, and $x^*$ solves
\begin{equation}\label{cm2}
\begin{array}{ll}
\inf_{x\in \Bbb R^n} & f_1(x)-\lambda^* f_2(x)\\
\text{s.t. } &  g(x)= u
\end{array}
\end{equation}\\
{\bf Case 3.} $f(\lambda^*)=0$, and $x^*$ solves
\begin{equation}\label{cm3}
\begin{array}{ll}
\inf_{x\in \Bbb R^n} & f_1(x)-\lambda^* f_2(x)\\
\text{s.t. } &  g(x)=v
\end{array}
\end{equation}\\

Theorem \ref{case1} below shows that Case 1 can be directly solved
as if an unconstrained (P), while Case 2 and Case 3 must do, namely
to find $\lambda^*$ and $x^*$, with a new version of S-Lemma. See
Theorem \ref{L3} below.

\begin{theorem}\label{case1}
If $(\lambda^*,x^*)$ happens to satisfy Case 1, then $\lambda^*$ can
be computed by
\begin{align}\label{cv}
\lambda^*=\sup \left\{\lambda\in \Bbb
R:\left(\begin{matrix}A_1-\lambda
A_2&b_1-\lambda b_2\\
b_1^T-\lambda b_2^T& c_1-\lambda c_2
\end{matrix}\right)\succeq0\right\}.
\end{align}
\end{theorem}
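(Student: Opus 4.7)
My plan is to prove the two inequalities $\lambda^*\le\hat{\lambda}$ and $\hat{\lambda}\le\lambda^*$, where $\hat{\lambda}$ denotes the supremum on the right-hand side of (\ref{cv}). The ``easy'' direction $\hat{\lambda}\le \lambda^*$ should hold without invoking Case 1: any feasible $\lambda$ for the SDP makes $f_1(x)-\lambda f_2(x)\ge 0$ for all $x\in\Bbb R^n$ (by the standard homogenization $(x,1)\mapsto (x,1)$ applied to the PSD block matrix), hence in particular for all $x\in X$; since (P) is well-defined with $f_2(x)>0$ on $X$, this yields $\lambda\le f_1(x)/f_2(x)$ for every $x\in X$, so $\lambda\le \lambda^*$ and the sup is at most $\lambda^*$.

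The substantive direction is $\lambda^*\le \hat{\lambda}$, and this is where the Case 1 hypothesis enters. By Remark \ref{rem1-1} (using $f(\lambda^*)=0$), $x^*$ solves $(P)_{\lambda^*}$ globally over $X$ with objective value $0$. Case 1 asserts $x^*$ is an \emph{interior} point of $X$ and satisfies both the first-order condition $(A_1-\lambda^*A_2)x^*=-(b_1-\lambda^*b_2)$ and the second-order condition $A_1-\lambda^*A_2\succeq 0$. The plan is to upgrade this from a local/constrained minimizer to an \emph{unconstrained global} minimizer of the convex quadratic $\varphi(x):=f_1(x)-\lambda^*f_2(x)$ on all of $\Bbb R^n$: because $A_1-\lambda^*A_2\succeq 0$, $\varphi$ is convex, and because $x^*$ is a stationary point of $\varphi$ (from the first-order condition), convexity gives $\varphi(x)\ge \varphi(x^*)=0$ for every $x\in\Bbb R^n$.

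Translating this global nonnegativity into a matrix inequality is the standard homogenization: $\varphi(x)\ge 0$ for all $x\in\Bbb R^n$ is equivalent to
\begin{align*}
\bigl(x^T,\,t\bigr)\left(\begin{matrix}A_1-\lambda^* A_2& b_1-\lambda^* b_2\\ b_1^T-\lambda^* b_2^T& c_1-\lambda^* c_2\end{matrix}\right)\vectwo{x}{t}\ge 0,\quad \forall (x,t)\in\Bbb R^{n+1},
\end{align*}
where the $t\ne 0$ case reduces to $\varphi(x/t)\ge 0$ after dividing by $t^2$, and the $t=0$ case reduces to $x^T(A_1-\lambda^*A_2)x\ge 0$, which follows from the second-order condition. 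Hence the block matrix is PSD at $\lambda^*$, i.e.\ $\lambda^*$ is feasible for the SDP, giving $\lambda^*\le \hat{\lambda}$.

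Combining the two inequalities yields $\lambda^*=\hat{\lambda}$, which is exactly (\ref{cv}). I do not anticipate a serious obstacle here: the only delicate point is the second-order condition at $t=0$ in the homogenization, but it is precisely built into the Case 1 hypothesis. The real work of the paper will be Cases 2 and 3, where no such convexity is available and the new S-lemma of Theorem \ref{L3} is needed.
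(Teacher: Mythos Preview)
Your proposal is correct and follows essentially the same argument as the paper: use the Case~1 hypotheses (convexity from $A_1-\lambda^*A_2\succeq0$ and stationarity of $x^*$) together with $f(\lambda^*)=0$ to conclude that $f_1(x)-\lambda^*f_2(x)\ge0$ on all of $\Bbb R^n$, hence $\lambda^*$ is feasible for the SDP; then use well-definedness ($f_2>0$ on $X$) to show no larger $\lambda$ can be feasible. The only cosmetic differences are that the paper argues the maximality of $\lambda^*$ by contradiction rather than by your direct inequality $\hat\lambda\le\lambda^*$, and that you spell out the $t=0$ step of the homogenization more explicitly.
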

\begin{proof}
Since Case 1 is assumed, $f_1(x)-\lambda^* f_2(x)$ is convex. By
(\ref{cm1}), $x^*$ is also a global minimizer of the unconstrained
quadratic problem
$$f(\lambda^*)=\inf_{x\in \Bbb R^n} f_1(x)-\lambda^*f_2(x)=0.$$
Then, we have $f_1(x)-\lambda^*f_2(x)\ge0, \forall x\in \Bbb R^n,$
which is equivalent to
$$\left(\begin{matrix}A_1-\lambda^*
A_2&b_1-\lambda^* b_2\\
b_1^T-\lambda^* b_2^T& c_1-\lambda^* c_2
\end{matrix}\right)\succeq0.$$
To show that $\lambda^*$ is the largest one satisfying the matrix
inequality (\ref{cv}), we suppose that there exists
$\bar{\lambda}>\lambda^*$ also satisfying that matrix inequality:
$$\left(\begin{matrix}A_1-\bar{\lambda}
A_2&b_1-\bar{\lambda} b_2\\
b_1^T-\bar{\lambda} b_2^T& c_1-\bar{\lambda} c_2
\end{matrix}\right)\succeq0.$$
Then
$$f_1(x)-\bar{\lambda}f_2(x)\ge0, \forall x\in \Bbb R^n.$$
Equivalently,
$$\frac{f_1(x)}{f_2(x)}\ge\bar{\lambda}>\lambda^*, \forall x\in \Bbb R^n,$$
which indicates that $f(\lambda^*)>0,$ a contradiction.
\end{proof}

To apply Theorem \ref{case1}, we first solve the SDP problem
(\ref{cv}) to get a candidate value $\lambda^*$. If
$f(\lambda^*)=0$, it has satisfied the first criterion in Case 1.
Since $A_1-\lambda^* A_2\succeq0$, $(P)_{\lambda^*}$ is convex. Any
unconstrained optimizer $x^*\in S$ satisfying $u\le g(x^*)\le v$
must also solve $(P)_{\lambda^*}$. Then, the value $\lambda^*$
computed by (\ref{cv}) is the optimal value of (P) with the optimal
solution $x^*$. Otherwise, if either $f(\lambda^*)>0$ or there is no
such $x^*\in S$ satisfying $u\le g(x^*)\le v$, Case 1 does not
happen and we have to look for Case 2 and Case 3. That is,
\begin{eqnarray}
\begin{array}{ll}
\lambda^*&=\inf\left\{\frac{f_1(x)}{f_2(x)}|~x\in X\right\}\\
&=\min\left\{\lambda_1\equiv\inf\{\frac{f_1(x)}{f_2(x)}|~g(x)=u\};\
\lambda_2\equiv\inf\{\frac{f_1(x)}{f_2(x)}|~g(x)=v\}\right\}.
\label{equivalue}
\end{array}
\end{eqnarray}

It is possible that at least one of $\lambda_1$ and $\lambda_2$ is
negative infinity, then (P) is unbounded below. Another possibility
is that, in (\ref{equivalue}), we have $\lambda_1=\lambda_2$. Then,
we need to check additionally which one,
$f(\lambda_1)=\inf\limits_{g(x)=u}\{f_1(x)-\lambda_1 f_2(x)\}=0$ or
$f(\lambda_2)=\inf\limits_{g(x)=v}\{f_1(x)-\lambda_2 f_2(x)\}=0$,
holds in order to determine $\lambda^*$. If neither $f(\lambda_1)=0$
nor $f(\lambda_2)=0$, (P) is unattainable according to Lemma
\ref{B2}. In the following subsection, we focus on solving the
quadratic fractional programming problem subject to one quadratic
equality constraint.

\subsection{An extended S-Lemma with equality}\label{sec:2.2}
Since Case 2 and Case 3 have the same pattern, we only discuss Case
2 in which $x^*$ satisfies $g(x^*)=u$. Without loss of generality,
we define $h(x)=g(x)-u$ and assume that $h(0)=0$. Otherwise, by
replacing $x$ with $x+x'$ for some nonzero vector $x'\in \{x\in \Bbb
R^n:g(x)=u\}$, the change of coordinate makes $\bar{h}(x)=h(x+x')$
satisfy $\bar{h}(0)=h(x')=0.$ Then, the problem casted in the new
coordinate system
\begin{equation}\label{cm2new}
\begin{array}{ll}
\inf_{x\in \Bbb R^n} & \left[\bar{f}_1(x)-\lambda^* \bar{f}_2(x)=
f_1(x+x')-\lambda^*
f_2(x+x')\right]\\
\text{s.t. } &  \bar{h}(x)=h(x+x')=0
\end{array}
\end{equation}
is equivalent to (\ref{cm2}) in the sense that if $x^*$ is an
optimal solution of (\ref{cm2new}), then $x^*+x'$ is optimal to
(\ref{cm2}). Conversely, if $x^*$ is optimal to (\ref{cm2}),
$x^*-x'$ is optimal to (\ref{cm2new}). Therefore, we only have to
deal with
\begin{equation}\label{cm22}
\begin{array}{lll}
f(\lambda^*)=&\inf_{x\in \Bbb R^n} & f_1(x)-\lambda^* f_2(x)\\
& \text{s.t. } &  h(x)=0
\end{array}
\end{equation}
where $h(x)=x^TBx+2d^Tx$ and $\lambda^*$ is the optimal value of the
following problem:
\begin{equation}\label{cm222}
\begin{array}{lll}
\lambda^*=&\inf_{x\in \Bbb R^n} & \frac{f_1(x)}{f_2(x)}\\
& \text{s.t. } &  h(x)=0.
\end{array}
\end{equation}
Theorem \ref{L3} below is an extended version of S-Lemma which,
under Assumption B, converts the fractional programming problem
(\ref{cm222}) to an equivalent SDP problem (\ref{uv}). Assumption B
plays the role of constraint qualification, which used to be the
primal Slater condition when an inequality system $f_1(x)-\lambda
f_2(x)<0, h(x)\le0$ is otherwise considered. Naturally, $h(x)=0$
does not possess any Slater point so that another type of constraint
qualification like Assumption B is needed.

\begin{theorem}[Extended S-Lemma with equality]\label{L3}
Under Assumptions A and B, the following two statements are
equivalent for each given $\lambda$.\\
(i) The system
\begin{align}\label{slm}
f_1(x)-\lambda f_2(x)<0, h(x)=0
\end{align}
is unsolvable.\\
(ii) There exists $\mu\in \Bbb R$ such that
$$f_1(x)-\lambda f_2(x)+\mu h(x)\ge0,\ \forall x\in \Bbb R^n.$$
\end{theorem}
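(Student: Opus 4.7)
The direction (ii)$\Rightarrow$(i) is immediate: on $\{h(x)=0\}$ the expression $f_1(x)-\lambda f_2(x)+\mu h(x)$ reduces to $f_1(x)-\lambda f_2(x)$, so (ii) prevents (\ref{slm}) from having any solution. All of the work is in (i)$\Rightarrow$(ii), and the plan is to use Assumption B to upgrade (\ref{slm}) to a fully homogeneous statement on $\mathbb R^{n+1}$, then to invoke Dines' convexity theorem together with a separation argument through the origin in order to extract the multiplier $\mu$.

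\noindent\textbf{Step 1 (Assumption B promotes the unsolvability to a homogeneous one).} Let $\zeta$ be the vector supplied by Assumption B and pick any $x_0\in\mathbb R^n$ with $x_0^TBx_0=0$. The expansion
\[
h(\zeta+tx_0)=h(\zeta)+2t(B\zeta+d)^Tx_0+t^2 x_0^TBx_0
\]
has all three terms vanishing: the first by $h(\zeta)=0$, the second by Assumption B, and the third by the hypothesis on $x_0$. Thus $h(\zeta+tx_0)\equiv 0$, and applying (i) to $\zeta+tx_0$ yields $f_1(\zeta+tx_0)-\lambda f_2(\zeta+tx_0)\ge 0$ for every $t\in\mathbb R$. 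Because this is a degree-two polynomial in $t$, its leading coefficient is nonnegative, giving
\begin{equation}\label{hstar}
x_0^TBx_0=0\;\Longrightarrow\;x_0^T(A_1-\lambda A_2)x_0\ge 0.
\end{equation}

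\noindent\textbf{Step 2 (homogenize and apply Dines).} Let $\tilde M_\lambda$ and $\tilde B$ be the $(n{+}1){\times}(n{+}1)$ symmetric matrices whose quadratic forms are the standard homogenizations of $f_1-\lambda f_2$ and of $h$. For any $y=(x,t)\in\mathbb R^{n+1}$ with $t\ne 0$ one has $y^T\tilde By=t^2 h(x/t)$ and $y^T\tilde M_\lambda y=t^2(f_1(x/t)-\lambda f_2(x/t))$, so (i) forces the second to be nonnegative whenever the first vanishes; the case $t=0$ is exactly (\ref{hstar}). Consequently,
\[
y^T\tilde By=0\;\Longrightarrow\;y^T\tilde M_\lambda y\ge 0\qquad\text{for every }y\in\mathbb R^{n+1}.
\]
By Dines' theorem the image $\tilde\Omega=\{(y^T\tilde M_\lambda y,\,y^T\tilde By):y\in\mathbb R^{n+1}\}$ is a convex subset of $\mathbb R^2$, and positive homogeneity in $y$ makes it a convex cone containing the origin; the display above says that $\tilde\Omega$ is disjoint from the open ray $R=\{(a,0):a<0\}$.

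\noindent\textbf{Step 3 (separation yields $\mu$); main obstacle.} Separating the convex cone $\tilde\Omega$ from $R$ by a line through the origin produces $(\alpha,\beta)\ne(0,0)$ with $\alpha a+\beta b\ge 0$ on $\tilde\Omega$ and $\alpha\ge 0$. When $\alpha>0$, setting $\mu:=\beta/\alpha$ gives $\tilde M_\lambda+\mu\tilde B\succeq 0$, and evaluating this matrix inequality at $y=(x,1)$ is precisely $f_1(x)-\lambda f_2(x)+\mu h(x)\ge 0$ for every $x\in\mathbb R^n$, which is (ii). I expect the hard part to be the degenerate case $\alpha=0$: the separating line collapses to $\{b=0\}$, no finite $\mu$ emerges from the separation itself, and one must also worry that a convex cone need not be closed, so sequences in $\tilde\Omega$ could a priori approach $R$. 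This situation corresponds exactly to $\tilde B$ being semidefinite; the zero entry in the bottom-right of $\tilde B$ then forces $d=0$, so $h(x)=x^TBx$ and $\{h=0\}=\ker B$. Assumption B becomes automatic, (\ref{hstar}) says $A_1-\lambda A_2$ is positive semidefinite on $\ker B$, and a routine Schur-complement argument with $\mu$ of the correct sign and sufficiently large magnitude then produces $\tilde M_\lambda+\mu\tilde B\succeq 0$, completing the proof of (ii).
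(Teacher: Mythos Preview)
Your Steps 1 and 2 are correct and in fact parallel the paper's argument closely (the paper shifts coordinates by $\zeta$ and then homogenizes; you instead derive (\ref{hstar}) directly along the ray $\zeta+tx_0$, which is a clean shortcut). The non-degenerate branch of Step 3 ($\alpha>0$) is also fine.

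The genuine gap is in your handling of the degenerate case $\alpha=0$. You never invoke Assumption A, and the ``routine Schur-complement argument'' you describe does not exist. A one-dimensional counterexample: take $h(x)=x^2$ (so $B=1$, $d=0$, $\tilde B=\mathrm{diag}(1,0)\succeq0$) and $f_1(x)-\lambda f_2(x)=2x$. Then $\ker B=\{0\}$, Assumption B is vacuous, (\ref{hstar}) holds trivially, and (i) holds because $x^2=0$ forces $x=0$. Yet $\tilde M_\lambda+\mu\tilde B=\begin{pmatrix}\mu&1\\1&0\end{pmatrix}$ has determinant $-1$ for every $\mu$, so (ii) fails. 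Thus the implication (i)$\Rightarrow$(ii) is simply false without Assumption A, and no amount of Schur-complement manipulation will rescue the degenerate branch.

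The paper's resolution is much simpler: Assumption A guarantees the existence of $x',x''$ with $h(x')<0<h(x'')$, so $(x',1)^T\tilde B(x',1)<0<(x'',1)^T\tilde B(x'',1)$ and $\tilde B$ is indefinite. This rules out $\alpha=0$ outright, since $\alpha=0$ would force $\beta\, y^T\tilde By\ge0$ for all $y$ with $\beta\ne0$, i.e.\ $\tilde B$ semidefinite. Replace your final paragraph with this observation and the proof is complete.
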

\begin{proof}
Notice that statement (ii) trivially implies statement (i) without
any condition. We only prove for the converse under Assumptions A
and B. The proof will be presented in two cases: either $B\zeta=0$ or $B\zeta\not=0.$\\
${\bf(a)}$  $B\zeta=0.$ Then Assumption B becomes
\begin{align}\label{S-lem}
x^TBx=0 \Rightarrow d^Tx=0.
\end{align}

We first rewrite system (\ref{slm}) as
\begin{subequations}\label{sl1}
\begin{align}
&x^T(A_1-\lambda A_2)x+2(b^T_1-\lambda b_2^T)x+c_1-\lambda c_2  & <0;\label{s1}\\
&x^TBx+2d^Tx&\le 0;\label{s2}\\
&x^T(-B)x-2d^Tx&\le 0,\label{s3}
\end{align}
\end{subequations}
which can be made homogeneous by introducing a new variable $t\in
\Bbb R$ as follows:
\begin{subequations}\label{sl2}
\begin{align}
&x^T(A_1-\lambda A_2)x+2(b^T_1-\lambda b_2^T)xt+(c_1-\lambda c_2)t^2  & <0;\label{s4}\\
&x^TBx+2d^Txt&\le 0;\label{s5}\\
&x^T(-B)x-2d^Txt&\le 0.\label{s6}
\end{align}
\end{subequations}
We want to assert that if (\ref{sl1}) is unsolvable, then
(\ref{sl2}) is
unsolvable too. Suppose in contrary that (\ref{sl2}) has a solution $(\bar{x},\bar{t}).$\\
If $\bar{t}\ne 0,$ by dividing both sides of (\ref{s4})-(\ref{s6})
by $\bar{t}^2,$ we see that $\frac{\bar{x}}{\bar{t}}$ is a solution
to (\ref{sl1}), which is a
contradiction.\\
If $\bar{t}=0,$ system (\ref{sl2}) becomes
\begin{subequations}\label{sl3}
\begin{align}
&\bar{x}^T(A_1-\lambda A_2)\bar{x}  & <0;\label{s7}\\
&\bar{x}^TB\bar{x}&\le 0;\label{s8}\\
&\bar{x}^T(-B)\bar{x}&\le 0.\label{s9}
\end{align}
\end{subequations}
Inequalities  (\ref{s8}) and (\ref{s9}) together  imply that
$\bar{x}^TB\bar{x}=0.$ According to (\ref{S-lem}), $d^T\bar{x}=0$
and thus (\ref{s2}) and (\ref{s3}) are satisfied by $\bar{x}.$
Moreover, we observe that $\beta\bar{x}$ is a solution to
(\ref{sl3}) for any $\beta>0.$ By (\ref{s7}), $\bar{x}^T(A_1-\lambda
A_2)\bar{x} <0,$ we can then choose $\beta$ large enough such that
$\beta\bar{x}$ satisfies (\ref{s1}), and also (\ref{s2})-(\ref{s3}).
Therefore, if the system (\ref{sl1}) does not have a solution, the
homogeneous system (\ref{sl2}) must be also unsolvable.

The system (\ref{sl2}) can be put into the quadratic form as
follows:
$$
\begin{array}{ll}
y^TPy  & <0\\
y^TQy&\le 0\\
y^T(-Q)y&\le 0
\end{array}
$$
with $P=\left(\begin{matrix}A_1-\lambda A_2&b_1-\lambda
b_2\\b_1^T-\lambda b_2^T& c_1-\lambda c_2 \end{matrix}\right)$,
$Q=\left(\begin{matrix}B&d\\d^T&0 \end{matrix}\right)$ and
$y=\left(\begin{matrix}x\\t \end{matrix}\right)\in \Bbb R^{n+1}.$ It
is unsolvable if and only if
$$\underbrace{\left\{(y^TPy,y^TQy): y\in \Bbb R^{n+1}\right\}}_{C}\cap
\underbrace{\left\{(a,b)\in \Bbb R^{2}: a<0,
b=0\right\}}_{D}=\emptyset.$$ Notice that $C$ is convex (see, e.g.
\cite{Dines}),  $0\in C,$ and $D$ is also convex. If we express
$$D=\left\{z=\left(\begin{matrix}a\\b
\end{matrix}\right)\in \Bbb R^2: e_1^Tz<0, e_2^Tz\le0, e_3^Tz\le0\right\}$$
with $e_1=\left(\begin{matrix}1\\0 \end{matrix}\right),
e_2=\left(\begin{matrix}0\\1
\end{matrix}\right),e_3=\left(\begin{matrix}0\\-1
\end{matrix}\right),$ by the separation theorem \cite{Baz}, there exist $\eta_1,
\eta_2\in \Bbb R, \eta_1^2+\eta_2^2\ne0,$ such that
\begin{align}\label{sl5}
\eta_1(y^TPy)+\eta_2(y^TQy)\ge 0, \ \forall y\in \Bbb R^{n+1}
\end{align}
and
\begin{align}\label{sl6}
(\eta_1, \eta_2)^T z\le 0,\ \forall z\in D.
\end{align}
Applying Farkas' Lemma \cite{Baz} to (\ref{sl6}), there exists
$(\xi_1, \xi_2,\xi_3)\ge0$ such that $$(\eta_1,
\eta_2)^T=\xi_1e_1^T+\xi_2e_2^T+\xi_3e_3^T.$$ Namely,
\begin{align}\label{sb}
\eta_1=\xi_1\ge0, \eta_2=\xi_2-\xi_3.
\end{align}
Substituting (\ref{sb})  into (\ref{sl5}), we have
\begin{align}\label{sl7}
\xi_1(y^TPy)+(\xi_2-\xi_3)(y^TQy)\ge 0, \ \forall y\in \Bbb R^{n+1}
\end{align}
and $\xi_1\ge0.$\\
If $\xi_1=0,$  we must have $\xi_2-\xi_3\ne 0$ since $\eta_1,
\eta_2$ can not be both zero. Then (\ref{sl7}) implies that either
$y^TQy\ge 0$ or $y^TQy\le 0, \ \forall y\in \Bbb R^{n+1}.$ That is,
$Q$ can not be indefinite. By  Assumption A, we have
$$\inf_{x\in \Bbb R^n}h(x)<0<\sup_{x\in \Bbb R^n}h(x)$$
and there exist $x',x''\in \Bbb R^n$ such that $h(x')<0$ and
$h(x'')>0.$ Let $y'=(x',1)$ and $y''=(x'',1)$ then $(y')^TQ(y')<0$
and $ (y'')^TQ(y'')>0.$ This contradiction leads to $\xi_1>0.$
Dividing $\xi_1>0$ throughout (\ref{sl7}) and letting
$\mu=\frac{\xi_2-\xi_3}{\xi_1},$ we obtain
$$y^TPy+\mu(y^TQy)\ge 0, \
\forall y\in \Bbb R^{n+1}.$$ In particular, for
$y=\left(\begin{matrix}x\\1\end{matrix}\right),\  x\in \Bbb R^n,$
there is
$$f_1(x)-\lambda f_2(x)+\mu h(x)\ge 0,$$
which shows the validity of statement (ii).

\noindent${\bf (b)}$ $B\zeta\ne0.$ In this case, letting
$z=x-\zeta$, we have
\begin{equation}\label{coef}
f_1(x)-\lambda
f_2(x)=f_1(z+\zeta)-\lambda f_2(z+\zeta)=z^TAz+ 2b^Tz+ c,
\end{equation} where
\begin{align}\nonumber
A=A_1-\lambda A_2,
b=A\zeta+b_1-\lambda b_2 \text{ and }
c=\zeta^TA\zeta+2b^T\zeta+c_1-\lambda c_2.
\end{align} Also,
\begin{equation}\label{coef1}
h(x)=h(z+\zeta)=\zeta^TB\zeta+2d^T\zeta +z^TBz+
2(B\zeta+d)^Tz=z^TBz+2(d')^Tz,
\end{equation}
where $\zeta^TB\zeta+2d^T\zeta=0$ since $\zeta\in X $  and
$d'=B\zeta+d.$ Obviously, System (\ref{slm}) is unsolvable if and
only if
\begin{align}\label{case2}
z^TAz+2b^Tz+c<0, z^TBz+2(d')^Tz=0
\end{align}
does not have a solution. Moreover, Assumption B says that, if
$z^TBz=0$, there must be $(d')^Tz=0$ (where $d'=B\zeta+d$). Therefore,
under Assumption B, if System (\ref{case2}) is unsolvable, we can
apply the proof for case {\bf(a)} to get $\mu'$ such that
$$z^TAz+2b^Tz+ c+\mu'(z^TBz+2(d')^Tz)\ge0,~ \forall z\in \Bbb R^n,$$
which is equivalent to
$$f_1(x)-\lambda f_2(x)+\mu' h(x)\ge0, \forall x\in \Bbb R^n$$
by (\ref{coef}) and (\ref{coef1}). This completes the proof of
Theorem \ref{L3}.
\end{proof}

Applying Theorem \ref{L3}, we can compute the optimal value
$\lambda^*$ of (\ref{cm222}) by the SDP problem (\ref{uv}) below. The
proof of Theorem \ref{B5} was already sketched in Sect. \ref{sec:1}
(\ref{b1})-(\ref{b3}) and thus will not be repeated here.

\begin{theorem}\label{B5} Under Assumptions A and B,
the optimal value of problem (\ref{cm222})
$$\lambda^*=\inf_{x\in \Bbb R^n}\left\{\frac{f_1(x)}{f_2(x)}| h(x)=x^TBx+2d^Tx=0\right\}$$
can be computed by
\begin{align}\label{uv}
\lambda^*=\sup_{\lambda,\mu\in \Bbb R}
\left\{\lambda:\left(\begin{matrix}A_1-\lambda A_2+\mu B&b_1-\lambda b_2+\mu d\\
b_1^T-\lambda b_2^T+\mu d^T& c_1-\lambda c_2
\end{matrix}\right)\succeq0\right\}.
\end{align}
\end{theorem}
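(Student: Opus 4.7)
The plan is to establish (\ref{uv}) by walking through the four-line chain already displayed in (\ref{b1})--(\ref{b3}). The engine of the argument is Theorem \ref{L3}, while the surrounding steps are purely formal rearrangements between infimum, supremum, and matrix inequalities.

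I would begin by rewriting the infimum as a supremum over those $\lambda$ for which no feasible $x$ beats the value $\lambda$, namely $\lambda^*=\sup\{\lambda:\{x\in\Bbb R^n\mid f_1(x)/f_2(x)<\lambda,\,h(x)=0\}=\emptyset\}$. Since (P) is well-defined, $f_2(x)>0$ on the feasible set $\{h(x)=0\}\subseteq X$, so the strict fractional inequality $f_1(x)/f_2(x)<\lambda$ can be cleared of its denominator and replaced by $f_1(x)-\lambda f_2(x)<0$. This yields line (\ref{b1}).

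Next, for each fixed $\lambda$, I would invoke Theorem \ref{L3}: under Assumptions A and B the infeasibility of $\{f_1(x)-\lambda f_2(x)<0,\,h(x)=0\}$ is equivalent to the existence of a multiplier $\mu\in\Bbb R$ with $f_1(x)-\lambda f_2(x)+\mu h(x)\ge 0$ for all $x\in\Bbb R^n$, giving line (\ref{b2}). A quadratic form in $x$ viewed in homogeneous coordinates $y=(x^T,1)^T$ is nonnegative on $\Bbb R^n$ precisely when the associated symmetric $(n+1)\times(n+1)$ matrix is positive semidefinite; absorbing the existentially quantified $\mu$ into the outer supremum then produces line (\ref{b3}), which is exactly (\ref{uv}).

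The only delicate point is that Theorem \ref{L3} must fire \emph{for every} $\lambda$ encountered in the supremum, not merely at a single value. This is where Assumption B is crucial: the qualifier involves only $B$, $d$, and a single vector $\zeta$ with $h(\zeta)=0$, and is therefore $\lambda$-independent, so the equality S-lemma applies uniformly along the whole Dinkelbach family. Consequently no $\lambda$-dependent Finsler-type assumption such as (\ref{ss11}) and no positive-definite matrix pencil is needed, and once the S-lemma step is secured the remaining passages are purely algebraic.
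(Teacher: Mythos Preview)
Your proposal is correct and follows exactly the approach the paper itself indicates: the paper states that ``the proof of Theorem \ref{B5} was already sketched in Sect.~\ref{sec:1} (\ref{b1})--(\ref{b3}) and thus will not be repeated here,'' and your argument is precisely that chain, with Theorem~\ref{L3} supplying the passage from (\ref{b1}) to (\ref{b2}). Your additional remark that Assumption~B is $\lambda$-independent, so the S-lemma fires uniformly across the Dinkelbach family, matches the paper's own emphasis on this point.
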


Now we can compute potential values for both $\lambda_1$ and
$\lambda_2$ in (\ref{equivalue}) by the SDP problem (\ref{uv}), but
yet to check $f(\lambda_1)=0$ or $f(\lambda_2)=0$ (and also to find
the optimal solution). It requires to solve the quadratic fractional
problem with an equality quadratic constraint of type (\ref{cm22}).
Mor\'{e} (\cite{MJ},Thm 3.2) has shown that, under a constraint
qualification (similar to our Assumption A) and assuming that
$A_1-\lambda^* A_2\not=0$, every optimal solution $x^*$ of
(\ref{cm22}) admits a Lagrange multiplier $\mu^*$ with no duality
gap. However, we do not know in advance whether $x^*$ exists.
Moreover, computing the saddle point $(x^*,\lambda^*)$
algorithmically requires the existence of a positive definite matrix
pencil $A_1-\lambda^* A_2+\mu B\succ0$ for some $\mu\in R$. Failing
to have a positive definite matrix pencil leads to a difficult
unstable (\ref{cm22}) that a small perturbation could make
(\ref{cm22}) become unbounded below. Our new version of S-lemma
hence provides an alternative way to deal with (\ref{cm22}). We show
that, under Assumption B, (\ref{cm22}) admits the strong duality so
that the rank one decomposition \cite{SZ,D} can be applied to get
$x^*$. We first notice that Assumption A implies $\{x\in \Bbb
R^n:g(x)=v\}\ne \emptyset,$ or equivalently, problem (\ref{cm22}) is
feasible.

 \begin{theorem}\label{cor}
Under Assumptions A and B, the strong duality holds for problem
(\ref{cm22}).
 \end{theorem}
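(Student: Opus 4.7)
The plan is to derive strong duality as an almost immediate corollary of the extended S-lemma (Theorem \ref{L3}). First I would verify that the primal value $f(\lambda^*) = 0$ in (\ref{cm22}). Under the standing assumption that (P) is attained, and since we are in Case 2, the optimal solution $x^*$ of (P) satisfies $h(x^*) = 0$, so $x^*$ is feasible for (\ref{cm22}); as $f_1(x^*)/f_2(x^*) = \lambda^*$ and $f_2(x^*) > 0$, we get $f_1(x^*) - \lambda^* f_2(x^*) = 0$. Conversely, any $x$ feasible for (\ref{cm22}) is feasible for (P), hence $f_1(x)/f_2(x) \geq \lambda^*$, i.e.\ $f_1(x) - \lambda^* f_2(x) \geq 0$. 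Combining, $f(\lambda^*) = 0$, and this infimum is attained at $x^*$.

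Next, the Lagrangian dual of (\ref{cm22}) reads
\[
d^* \;=\; \sup_{\mu \in \Bbb R} \inf_{x \in \Bbb R^n} \left\{ f_1(x) - \lambda^* f_2(x) + \mu h(x) \right\}.
\]
Weak duality immediately gives $d^* \leq f(\lambda^*) = 0$. For the reverse inequality, note that $f(\lambda^*) = 0$ is precisely the assertion that the system $f_1(x) - \lambda^* f_2(x) < 0$, $h(x) = 0$ is unsolvable, which is statement (i) of Theorem \ref{L3} at $\lambda = \lambda^*$. Invoking Theorem \ref{L3} under Assumptions A and B, statement (ii) then produces $\mu^* \in \Bbb R$ with $f_1(x) - \lambda^* f_2(x) + \mu^* h(x) \geq 0$ for every $x \in \Bbb R^n$. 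Taking the infimum over $x$ shows $d^* \geq 0 = f(\lambda^*)$, closing the duality gap and exhibiting $\mu^*$ as an optimal dual multiplier.

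The substantive work has already been packaged into Theorem \ref{L3}; the only thing to verify here is that $f(\lambda^*) = 0$ so that the hypothesis of that theorem holds at $\lambda = \lambda^*$, and there is no real obstacle beyond that bookkeeping. The payoff, as the discussion preceding the theorem indicates, is that this route does not require $A_1 - \lambda^* A_2 \neq 0$ nor the existence of a positive definite matrix pencil $A_1 - \lambda^* A_2 + \mu B \succ 0$, both being replaced by the constraint qualification Assumption B; consequently strong duality covers hard cases of (\ref{cm22}) that are inaccessible to Mor\'e's treatment and that would otherwise be destabilized by small perturbations.
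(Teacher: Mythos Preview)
Your argument is correct and uses the same key tool as the paper---Theorem \ref{L3}---to close the duality gap. The paper's presentation differs only cosmetically: rather than first pinning down $f(\lambda^*)=0$ and then invoking the S-lemma once, it applies Theorem \ref{L3} at every level $\nu$ to rewrite $f(\lambda^*)=\sup\{\nu:\{f_1(x)-\lambda^*f_2(x)<\nu,\ h(x)=0\}\ \text{unsolvable}\}$ directly as the dual SDP value (\ref{qe2}), so the argument there does not need the attainment assumption you invoke.
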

\begin{proof}
Based on the extended S-Lemma Theorem \ref{L3}, we can evaluate
$f(\lambda^*)$ of (\ref{cm22}) by an SDP as follows:
\begin{align}
f(\lambda^*)&=\inf_{x\in \Bbb R^n}\{f_1(x)-\lambda^*f_2(x)| h(x)=0\}\nonumber\\
&=\sup\left\{\nu\in \Bbb R|\{x\in \Bbb R^n:f_1(x)-\lambda^*
f_2(x)<\nu, h(x)=0\}=
\emptyset\right\}\nonumber\\
&=\sup\left\{\nu\in \Bbb R|f_1(x)-\lambda^* f_2(x)-\nu+\mu h(x)\ge0,
\forall x\in \Bbb R^n\right\}
\nonumber\\
&=\sup\limits_{\nu,\mu\in \Bbb R}\left\{\nu\in \Bbb R|\left(\begin{matrix}A_1-\lambda^*A_2+\mu B&b_1-\lambda^*b_2+\mu d\\
b_1^T-\lambda^*b_2^T+\mu
d^T&c_1-\lambda^*c_2-\nu\end{matrix}\right)\succeq0\right\}\label{qe2}.
\end{align}
Notice that (\ref{qe2}) is the SDP reformulation for the Lagrange
dual problem of (\ref{cm22}), e.g., see \cite{Wolk}. It means that
the strong duality holds for Problem (\ref{cm22}).
\end{proof}

Due to the strong duality of Theorem \ref{cor}, the conic dual
problem
of (\ref{qe2})\\
$\text{(SDR)}\hspace*{2.5cm}
\begin{array}{lll}
  &\inf  & M(f_1-\lambda^* f_2)\bullet Z\\
\ &\text{s.t. } & M(h)\bullet Z= 0\\
\ & \ & Z\succeq 0, I_{nn}\bullet Z=1
\end{array}
$ \\
is indeed a tight SDP relaxation of Problem (\ref{cm22}), where
$$M(f_1-\lambda^* f_2)=\left(\begin{matrix}A_1-\lambda^*
A_2&b_1-\lambda^* b_2
\\b_1^T-\lambda^* b_2^T&c_1-\lambda^* c_2\end{matrix}\right),
M(h)=\left(\begin{matrix}B&d\\d^T&0\end{matrix}\right), I_{nn}=
\left(\begin{matrix}\bf 0&0\\0^T&1\end{matrix}\right),$$  and $Z\in
S^{n+1}_+$, the set of $(n+1)\times (n+1)$ positive semi-definite
symmetric matrices. If (P) is attained, then one of the values
$\lambda_1$ and $\lambda_2$, together with its (SDR) is attained.
Then, the optimal solution $x^*$ of (\ref{cm22}) can be obtained by
applying the matrix rank-one decomposition procedure \cite{SZ,D} to
an optimal solution $Z^*$ of (SDR). The attainment, however, can be
assured mostly under the dual Slater condition.

\begin{rem}\label{rem2-1}
We comment on the applicability of Assumption B.
\begin{enumerate}
\item[(3.1)] If $B\succ 0$ or $B\prec 0$ as assumed in \cite{D} Prop.
3.1, then $x^TBx=0$ if and only if $x=0.$ Hence Assumption B is
trivially true.
\item[(3.2)] If $B\succeq 0$ or $B\preceq 0$ but not
definite, then Assumption B is equivalent to the fact that $d$
is in the range space of $B.$ Indeed, since $B\succeq0$ (the
case $B\preceq0$ is similarly considered), $x^TBx=0$ if and only
if $Bx=0.$ Suppose that $d$ is in the range space of $B$ such
that $d=Bz$ for some $z.$ Then, for all $\zeta\in X$, there is
$(B\zeta+d)^Tx=(B\zeta+Bz)^Tx=\zeta^TBx+z^TBx=0$ if $x$
satisfies $Bx=0$. That is, Assumption B holds. Conversely,
%
if Assumption B holds, then $(B\zeta+d)^Tx=0$ for all $x$ such that $Bx=0.$ We have $(B\zeta+d)^Tx=(B\zeta)^Tx+d^Tx=\zeta^TBx+d^Tx=0,$ where
$\zeta^TBx=0$ since $Bx=0.$ It implies that $d^Tx=0$ for all $x$ such that $Bx=0.$ That is, $d$ is in the orthogonal complementary space of the null space of $B,$ then $d$ must be in the range space of $B.$

\item[(3.3)]  Assumption B also covers cases in which $B$ is indefinite.
For example, let
$B=\left(\begin{matrix}1&0\\0&-1\end{matrix}\right)$ and
$d^T=(1,1).$ Then Assumption B holds if and only if there exists
$\zeta\in\{x: x_1^2-x_2^2+2x_1+2x_2=0\}$ such that $B\zeta+d=0.$
It happens that $\zeta=(-1\ 1)^T$ is the only possibility.
\item[(3.4)] The extended Finsler's theorem [\cite{BE},Thm A.2] is also
a version of S-Lemma with equality, but it can not be applied to
compute $\lambda^*$ because the condition (\ref{ss11}) is very
difficult to satisfy for all $\lambda$. For example,
$$
\lambda^*=\inf_{x\in \Bbb R^3}\left\{\frac{x_1^2+x_3^2+2}{x_1^2+x_3^2+1}:
h(x)=x_1^2-x_2^2+2x_1+2x_2=0\right\}
$$
can be solved by our S-lemma Theorem \ref{L3} to get
$\lambda^*=1$. However, with $\lambda^*=1$,
$f_1(x)-\lambda^*f_2(x)=1$ and
$B=\left(\begin{matrix}1&0&0\\0&-1&0\\0&0&0\end{matrix}\right)$
is indefinite. There is no $\eta$ satisfying condition
(\ref{ss11}) of the extended Finsler's theorem.
\end{enumerate}
\end{rem}

\subsection{Examples}\label{sec:2.3}

In this subsession, two examples are used to demonstrate the entire
procedure of our ideas to solve (P). From the examples, we can also
observe that, although (QCRQ) studied by Beck and Teboulle \cite{BT}
is the most generic framework for quadratic fractional programming
problems, their approach fails to solve both examples since the
conditions (\ref{bt-1})-(\ref{bt-3}) are too restrictive to be
satisfied.

\begin{exam}\label{example2} Solve
\begin{align}\label{EX8}
\inf_{x\in \Bbb R^3}\left\{\frac{x_1^2+x_3^2+2x_3}{x_2^2+1}:0\le
x_3^2+2x_3\le 3\right\}.
\end{align}
\end{exam}
We first notice that $f_2(x)=x_2^2+1>0$ and thus (\ref{EX8}) is
well-defined. Moreover, Assumption A holds. To solve (\ref{EX8}), we check Case 1 first.\\
\emph{Step 1.} Solve the SDP problem (\ref{cv}) to get a candidate
$\lambda^*=-1.$ Then, for this $\lambda^*,$
$$f(\lambda^*)=\inf_{x\in \Bbb R^3}\{f_1(x)-\lambda^*f_2(x)\}=\inf_{x\in \Bbb R^3}\{x_1^2+x_3^2+2x_3+x_2^2+1\}
=0,$$ which is attained at $\hat{x}=(0,0,-1)^T$. Since $\hat{x}$
does not satisfy the constraint $0\le x_3^2+2x_3\le 3,$ Case 1 does
not hold. We go to the next step.\\
\emph{Step 2.} At this step we need to check Assumption B and it is
indeed satisfied.
Solve the SDP problem (\ref{uv}) for two cases: $h(x)=g(x)-u$ and $h(x)=g(x)-v.$\\
$\bullet$\ \ $h(x)=g(x)-u=x_3^2+2x_3.$ Since $h(0)=0$ we do not need
to make any change of coordinate. An immediately result from solving
(\ref{uv})
 gives $\lambda_1=0.$\\
$\bullet$\ \   $h(x)=g(x)-v=x_3^2+2x_3-3.$ Since $h(0)\ne0,$ we
select
$$x'=(0,0,1)^T\in\{x\in \Bbb R^3:g(x)-v=x_3^2+2x_3-3=0\}$$
and make a coordinate change by replacing $x$ with $x+x'$ so that
$$f_1(x+x')=x_1^2+(x_3+1)^2+2(x_3+1)= x_1^2+x_3^2+4x_3+3;\  f_2(x+x')=x_2^2+1;$$ and
$h(x+x')=(x_3+1)^2+2(x_3+1)-3=x_3^2+4x_3.$ Solving the SDP
(\ref{uv}) we also get $\lambda_2=0.$\\ Since
$\lambda_1=\lambda_2=0$, we have to compute $f(\lambda_1)$ and
$f(\lambda_2)$ to see which one is 0. It turns out that
\begin{align}\label{vd8}
\begin{array}{llll}
0=&f(\lambda_1)=&\inf_{x\in \Bbb R^3}\ & x_1^2+x_3^2+2x_3\\
& \ & \text{  s.t.  } &  x_3^2+2x_3=0,
\end{array}
\end{align}
whereas
\begin{align}
\begin{array}{lll}
3=f(\lambda_2)=&\inf_{x\in \Bbb R^3}\ & x_1^2+x_3^2+4x_3+3\\
& \ \text{  s.t.  } &  x_3^2+4x_3=0.
\end{array}
\end{align}
Therefore, $\lambda^*=\lambda_1=0$ and the optimal solution set for
(\ref{vd8}) is $$X^*=\{(0,a,0)^T, (0,b,-2)^T|~ a,b\in \Bbb
R\}\subset \Bbb R^3.$$ Since there is no change of coordinate in
computing $\lambda_1$, the set $X^*$ is also the optimal solution
set for (\ref{EX8}).

\begin{rem}
In Example \ref{example2}, since $A_2$ is a singular positive
semi-definite matrix, Condition (\ref{bt-1}) is violated. After
homogenization, the related problems $(H)$ (\ref{3}) and $(H_0)$
(\ref{4}) are formulated as
\begin{equation}\nonumber
\begin{array}{ll} (H)\ \ \ \ \
\min& z_1^2+z_3^2+2z_3s\\
\text{s.t. } & z_2^2+s^2=1\\
&0\le z_3^2+2z_3s\le 3s^2;
\end{array}
\end{equation}
and
\begin{equation}\nonumber
\begin{array}{ll} (H_0)\ \ \ \ \
\min& z_1^2+z_2^2\\
\text{s.t. } & z_2^2=1\\
&0\le z_3^2\le 0.
\end{array}
\end{equation}
It is easy to see $v(H)=v(H_0).$ Then condition (\ref{bt-2}) is also
violated. In other words, Beck and Teboulle's algorithm proposed in
\cite{BT} can not be used to solve Example \ref{example2}.
\end{rem}

\begin{exam}\label{VD}
Let $X=\{(x_1, x_2)^T| 0\le x_2^2+2x_1\le 2\}$ and solve
\begin{align}\label{EX7}
\inf_{x\in \Bbb R^2}\left\{\frac{2x_2}{x_2^2+1}: x\in X\right\}.
\end{align}
\end{exam}
Again, (\ref{EX7}) is well-defined and Assumption A is satisfied.
However,  since $d=(1,0)^T$ is not in the range space of
$B=\left[\begin{matrix}0&0\\0&1\end{matrix}\right]\succeq0,$
Assumption B is violated. Fortunately, we will see that (\ref{EX7})
meets Case 1, which does not need Assumption B. To justify, we solve
the SDP problem (\ref{cv}) to get $\lambda^*=-1$ and find that
$A_1-\lambda^*A_2=\left[\begin{matrix}0&0\\0&1\end{matrix}\right]\succeq0.$
Moreover, since
$$f(\lambda^*)=f(-1)=\inf_{x\in \Bbb R^2} \{f_1(x)-\lambda^*f_2(x)=x_2^2+2x_2+1\}=0.$$
The stationary points of $f_1(x)-\lambda^*f_2(x)=x_2^2+2x_2+1$ is
defined as follows.
$$S=\{x=(x_1, x_2)^T\in \Bbb R^2: 2x_2+2=0\}=\{(x_1,-1)^T\}\subset \Bbb R^2.$$
Now the intersection  $S\cap X=\{(x_1,-1)^T: -\frac{1}{2}\le x_1\le
\frac{1}{2}\}$ is the optimal solution set of (\ref{EX7}) and
$\lambda^*=-1$ is the optimal value.

\begin{rem}
Since Example \ref{example2} and Example \ref{VD} share the same
$f_2(x)$, Condition (\ref{bt-1}) is again violated. Similarly, for
Example \ref{VD}, we have:
\begin{equation}\nonumber
\begin{array}{ll} (H)\ \ \ \ \
\min& 2z_2s\\
\text{s.t. } & z_2^2+s^2=1\\
&0\le z_2^2+2z_1s\le 2s^2
\end{array}
\end{equation}
and $(H_0):$
\begin{equation}\nonumber
\begin{array}{ll} (H_0)\ \ \ \ \
\min& 0\\
\text{s.t. } & z_2^2=1\\
&0\le z_2^2\le 0.
\end{array}
\end{equation}
Since Problem $(H_0)$ is infeasible, Condition (\ref{bt-2}) can not
be true. Beck and Teboulle's algorithm fails to solve Example
\ref{VD}.
\end{rem}

\section{ Quadratic Fractional Programming Problem with One Inequality Quadratic
Constraint (QF1QC)}
\label{sec:3}
As analyzed in Sect. \ref{sec:2.1}, when Assumption A is violated, Problem (P) becomes either an  unconstrained problem or having one-sided  constraint $g(x)\le v$ or $u\le g(x).$ The unconstrained quadratic fractional
programming problem is, in fact, equivalent to the convex unconstrained quadratic problem as studied in Sect. \ref{sec:2.1}. In this section we study Problem (P) with an one-sided quadratic constraint taking the  following form:
\begin{equation}\label{QF1QC}
{\rm (QF1QC)}~ \inf_{x\in \Bbb R^n}\left\{
\frac{f_1(x)}{f_2(x)} : g(x)\le0 \right\}
\end{equation}
where $f_1(x), f_2(x)$  and $g(x)$ are quadratic functions as
defined at the beginning of the paper. The parametric problem
$\rm{(P)}_\lambda$ is now reduced to a quadratic programming problem
having one quadratic inequality constraint (QP1QC):
\begin{equation}\label{QF1QC_lambda}
\begin{array}{lll}
{\rm{(QF1QC)}_\lambda} \hspace*{1.0cm} f(\lambda)=  &\inf_{x\in \Bbb R^n} & f_1(x)-\lambda f_2(x)\\
\ &\text{s.t. } & g(x)\le 0.
\end{array}
\end{equation}
Assume in this section that  problem (QF1QC) satisfies the Slater
condition, i.e., there exists $\bar{x}\in \Bbb R^n$ such that
$g(\bar{x})<0.$ Otherwise, the problem (QF1QC) is either infeasible
or reduced to an unconstrained fractional programming problem, the
latter of which has been discussed in Section 2.
\begin{lemma}\label{No-Slater}
If Problem (QF1QC) has no Slater point, it is either infeasible or
equivalent to an unconstrained quadratic fractional programming
problem.
\end{lemma}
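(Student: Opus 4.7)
The plan is to unfold what "no Slater point'' means quantitatively and then reduce the feasible set to an affine subspace on which the objective remains quadratic in a new coordinate. First I would observe that the absence of a Slater point says precisely that $g(x)\ge 0$ for every $x\in\mathbb{R}^n$. Two cases then arise. If in addition $g(x)>0$ for every $x$, then no $x$ satisfies $g(x)\le 0$ and (QF1QC) is infeasible. Otherwise there exists $x_0$ with $g(x_0)=0$, and together with $g\ge 0$ this forces $x_0$ to be a global minimizer of $g$ with $\min g=0$; the feasible set of (QF1QC) then coincides with the set of global minimizers $M=\{x\in\mathbb{R}^n:g(x)=0\}$.

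Next I would extract structural consequences of $g\ge 0$ everywhere. Since $g(x)=x^TBx+2d^Tx+\alpha$ is a nonnegative quadratic, one gets $B\succeq 0$ (look along any ray $tv$ and let $|t|\to\infty$), and $d$ must lie in the range of $B$ (otherwise $g$ would be linear and unbounded below along a direction in $\ker B$ on which $d$ is not orthogonal). These two facts imply that the minimizer set $M$ equals the affine subspace $\{x:Bx+d=0\}$, which can be written as $M=x_0+\ker B$. Choosing a matrix $U\in\mathbb{R}^{n\times k}$ whose columns form a basis of $\ker B$ (so $k=\dim\ker B$), every feasible point has the form $x=x_0+Uy$ with $y\in\mathbb{R}^k$, and conversely every such point is feasible.

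Substituting $x=x_0+Uy$ into the objective yields
\begin{equation*}
\frac{f_1(x)}{f_2(x)}=\frac{y^T(U^TA_1U)y+2(A_1x_0+b_1)^TUy+f_1(x_0)}{y^T(U^TA_2U)y+2(A_2x_0+b_2)^TUy+f_2(x_0)}=:\frac{\tilde f_1(y)}{\tilde f_2(y)},
\end{equation*}
which is a ratio of two quadratic functions of $y$. Because (QF1QC) is assumed well defined, $f_2>0$ on the feasible set $M$, hence $\tilde f_2(y)>0$ for all $y\in\mathbb{R}^k$, so the reduced problem $\inf_{y\in\mathbb{R}^k}\tilde f_1(y)/\tilde f_2(y)$ is also well defined. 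This gives the claimed equivalence to an unconstrained quadratic fractional programming problem.

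The only non-routine step is justifying the implication "no Slater point $\Rightarrow$ $B\succeq 0$ and $d\in\mathrm{range}(B)$'' cleanly enough that the parametrization of $M$ by $\ker B$ is legitimate; once that is in hand the rest is a direct change of variables. I do not foresee a serious obstacle — the argument is essentially a structural dissection of a globally nonnegative quadratic together with an affine reparametrization of its zero set.
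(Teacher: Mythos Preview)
Your proposal is correct and follows essentially the same approach as the paper: deduce from $g\ge 0$ that $B\succeq 0$ and $d\in\mathrm{range}(B)$, identify the feasible set with the affine minimizer set $\{x:Bx+d=0\}$, and then reparametrize by $\ker B$ to obtain an unconstrained quadratic fractional problem. The only cosmetic difference is that the paper uses the Moore--Penrose inverse to write the particular solution as $-B^{+}d$ and phrases the infeasible/feasible dichotomy via the sign of $-d^{T}B^{+}d+\alpha$, whereas you take an arbitrary $x_0$ with $g(x_0)=0$; the substance is identical.
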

\begin{proof}
The Slater condition is violated only when $g(x)\ge0, \forall x\in
\Bbb R^n.$ This implies that $B\succeq0,$ i.e,  $g(x)$ is convex,
and $d\in \mathcal{R}(B),$ where $\mathcal{R}(B)$ is the range space
of $B.$ That is, the affine space
$$\{x\in \Bbb R^n: Bx+d=0\}\ne\emptyset.$$
Then $Bx+d=0 \Leftrightarrow x=-B^+d+Wz,$ where $B^+$ is the
Moore-Penrose generalized inverse of $B$ and $W$ is a matrix whose
columns form a basis for the null space of $B$ if $B$ is singular;
and $W=0$ if $B$ is nonsingular. Since $g(x)$ is convex,
$x=-B^+d+Wz$ is the global minimizer of $g(x)$ with the minimum
value $-d^TB^+d+\alpha.$ If $-d^TB^+d+\alpha>0, g(x)\ge -d^TB^+d
+\alpha>0$ implies that (QF1QC) is infeasible. If $-d^TB^+d
+\alpha=0,$
 then $g(x)\ge0.$ In this case, the feasible domain $X=\{x| g(x)\le0\}$ is reduced to
$X=\{x| g(x)=0\}.$ That is
$$\left\{x\in \Bbb R^n: g(x)\le 0\right\}=
\left\{\begin{array}{ll}\{-B^+d+Wz,\  z\in \Bbb R^m\}, &\text{ if }
d^TB^+d=\alpha\\
\emptyset, & \text{ if } d^TB^+d<\alpha
\end{array}\right. $$
where $m$ is the dimension of the null space of $B.$ In the case
that (QF1QC) is feasible, it can be expressed in term of $z\in \Bbb
R^m$ and becomes the following unconstrained fractional programming
problem:
\begin{align}\label{ucstr}
\lambda^*=\inf_{x\in \Bbb R^n}\left\{\frac{f_1(x)}{f_2(x)}:g(x)\le
0\right\}=\inf_{z\in\Bbb R^m} \frac{\bar{f}_1(z)}{\bar{f}_2(z)},
\end{align}
where $\bar{f}_i(z)=f_i(-B^+d+Wz)=z^TQ_iz-2q_i^Tz+\gamma_i,\
Q_i=W^TA_iW,\ q_i^T=(d^TB^+A_i-b_i^T)W,\
\gamma_i=d^TB^+A_iB^+d-2b_i^TB^+d+c_i, i=1,2.$
\end{proof}
\begin{theorem}\label{B4}
For any well-defined problem (QF1QC) satisfying the Slater
condition, its optimal value $\lambda^*$ can be determined by
solving the following semi-definite programming problem
\begin{equation}\label{F1}
\lambda^*= \sup_{\lambda\in \Bbb R, \mu\ge0}
\left\{\lambda:\left(\begin{matrix}A_1-\lambda A_2+\mu B&b_1-\lambda b_2+\mu d\\
b_1^T-\lambda b_2^T+\mu d^T& c_1-\lambda
c_2+\mu\alpha\end{matrix}\right)\succeq0\right\}.
\end{equation}
\end{theorem}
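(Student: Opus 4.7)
The plan is to mirror the chain of equivalences (\ref{b1})--(\ref{b3}) used for the equality case, but now on the one-sided constraint $g(x)\le 0$, and to substitute the classical Yakubovich S-lemma for Theorem \ref{L3}. Concretely, I will start from Lemma \ref{B1}, which gives $\lambda^*=\max\{\lambda:f(\lambda)\ge 0\}$ whenever $\lambda^*>-\infty$. Unwinding the definition,
\begin{align*}
f(\lambda)\ge 0 \;&\Longleftrightarrow\; \inf\{f_1(x)-\lambda f_2(x):g(x)\le 0\}\ge 0\\
&\Longleftrightarrow\; \{x\in\mathbb R^n:f_1(x)-\lambda f_2(x)<0,\;g(x)\le 0\}=\emptyset.
\end{align*}
Thus $\lambda^*$ is the supremum of the $\lambda$'s for which this strict--nonstrict quadratic system is infeasible.

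Next, I would invoke the (classical) S-lemma for one inequality. Because (QF1QC) is assumed to satisfy the Slater condition, the Slater point $\bar x$ with $g(\bar x)<0$ serves precisely as the constraint qualification required by Yakubovich's theorem. Consequently, for every fixed $\lambda$, the infeasibility of the above system is equivalent to the existence of a multiplier $\mu\ge 0$ with
\[
f_1(x)-\lambda f_2(x)+\mu g(x)\ge 0,\qquad\forall x\in\mathbb R^n.
\]
This homogeneous quadratic nonnegativity condition in $x$ is in turn equivalent to positive semidefiniteness of the $(n+1)\times(n+1)$ matrix
\[
\begin{pmatrix}A_1-\lambda A_2+\mu B & b_1-\lambda b_2+\mu d\\ b_1^T-\lambda b_2^T+\mu d^T & c_1-\lambda c_2+\mu\alpha\end{pmatrix}\succeq 0,
\]
exactly the constraint appearing in (\ref{F1}). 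Taking the supremum over admissible $\lambda$ (with $\mu\ge 0$ a free variable) therefore recovers $\lambda^*$.

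To finish, I would address the boundary behavior: if (QF1QC) is unbounded below ($\lambda^*=-\infty$), the SDP (\ref{F1}) is infeasible, so the convention $\sup\emptyset=-\infty$ makes the two sides agree; if $\lambda^*>-\infty$, Lemma \ref{B1} guarantees the maximum on the left is attained and the argument above shows the supremum on the right equals it. No rank-one rounding or attainment discussion is needed at this stage because the theorem only claims computation of the \emph{value}; the optimal $x^*$ (if any) would be recovered afterwards from the dual SDP exactly as in the proof of Theorem \ref{cor}.

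The main conceptual point, and the reason the one-sided (P) is strictly easier than the two-sided one, is that the Slater point comes for free here: $g(\bar x)<0$ is the natural and standard qualification for the inequality S-lemma, so no substitute like Assumption B and no Dines-style convexity plus Farkas separation detour is needed. The only technical subtlety I expect is verifying that the classical S-lemma may be applied \emph{uniformly in $\lambda$}, but this is immediate since the Slater point $\bar x$ depends only on $g$, not on $f_1-\lambda f_2$, so the qualification holds for every $\lambda$ simultaneously. Hence the SDP (\ref{F1}) delivers $\lambda^*$ without any further condition, which is the content of Theorem \ref{B4}.
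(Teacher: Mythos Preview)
Your proposal is correct and follows essentially the same route as the paper: the paper writes the chain $\lambda^*=\sup\{\lambda:\{f_1-\lambda f_2<0,\,g\le0\}=\emptyset\}$ directly from the infimum definition and then applies the classical S-lemma under the Slater condition to pass to the LMI form, exactly as you outline. Your explicit detour through Lemma~\ref{B1} and your separate treatment of the $\lambda^*=-\infty$ case are minor embellishments rather than a different argument.
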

\begin{proof}
We have
\begin{subequations}
\begin{align}
\lambda^*&=\inf_{x\in \Bbb R^n}\left\{\frac{f_1(x)}{f_2(x)}:g(x)\le 0\right\}\nonumber\\
&=\sup\left\{\lambda:\{x\in \Bbb
R^n|\lambda>\frac{f_1(x)}{f_2(x)}, g(x)\le 0\}=\emptyset\right\}\nonumber\\
&=\sup\left\{\lambda:\{x\in \Bbb R^n|f_1(x)-\lambda
f_2(x)<0,g(x)\le0\}=\emptyset\right\}\label{b}\\
&=\sup\left\{\lambda:f_1(x)-\lambda
f_2(x)+\mu g(x)\ge0,\forall x\in \Bbb R^n, \mu\ge0\right\}\label{c}\\
&=\sup\left\{\lambda:\left(\begin{matrix}A_1&b_1\\b_1^T&c_1\end{matrix}\right)
-\lambda\left(\begin{matrix}A_2&b_2\\b_2^T&c_2\end{matrix}\right)
+\mu\left(\begin{matrix}B&d\\d^T&\alpha\end{matrix}\right)\succeq
0, \mu\ge 0\right\}\nonumber\\
&=\sup_{\lambda\in \Bbb R, \mu\ge0}
\left\{\lambda:\left(\begin{matrix}A_1-\lambda A_2+\mu B&b_1-\lambda b_2+\mu d\\
b_1^T-\lambda b_2^T+\mu d^T& c_1-\lambda
c_2+\mu\alpha\end{matrix}\right)\succeq0\right\}\nonumber,
\end{align}
\end{subequations}
where the equivalence of (\ref{b}) and (\ref{c}) is due to a
standard S-lemma under the Slater condition \cite{D,Yaku}.
\end{proof}\\
To know whether (QF1QC) is attained and to find $x^*$ that solves
(QF1QC), we need to check whether $\lambda^*$ found in (\ref{F1})
satisfies $f(\lambda^*)=0$ and to solve $\rm{(QF1QC)}_{\lambda^*}.$  We
have
\begin{equation} f(\lambda^*)=\sup\left\{\nu\in \Bbb R:\{x\in
\Bbb R^n|f_1(x)-\lambda^* f_2(x)<\nu, g(x)\le
0\}=\emptyset\right\}.\label{po}
\end{equation}
Since the Slater condition is assumed, we can apply S-lemma to
(\ref{po}) and obtain
$$f(\lambda^*)=\sup\left\{\nu\in \Bbb R: f_1(x)-\lambda^* f_2(x)-\nu+\eta g(x)\ge 0, \forall x\in \Bbb R^n,
\eta\ge 0\right\},$$ which is equivalent to a convex SDP
formulation:
\begin{align}\label{SD}
 f(\lambda^*)=\sup\left\{\nu\in \Bbb R:
\left(\begin{matrix}A_1-\lambda^* A_2+\eta B&b_1-\lambda^* b_2+\eta
d\\b_1^T-\lambda^* b_2^T+\eta d^T &c_1-\lambda^*
c_2+\eta\alpha-\nu\end{matrix}\right)\succeq 0, \eta\ge 0\right\}.\
\end{align}
We notice that (\ref{SD}) is the Lagrange dual problem of
$\rm{(QF1QC)}_{\lambda^*}$ \cite{Wolk}. It means that, the strong duality
holds for $\rm{(QF1QC)}_{\lambda^*}.$ Therefore, $\rm{(QF1QC)}_{\lambda^*}$
has the following tight SDP relaxation:
\begin{equation}
\begin{array}{ll}
 \inf  & M(f_1-\lambda^* f_2)\bullet Z\\
\text{s.t. } & M(g)\bullet Z\le 0 \label{SDP-tight2}\\
 \ & Z\succeq 0, I_{nn}\bullet Z=1,
\end{array}
\end{equation}
where  $M(f_1-\lambda^* f_2), M(g),I_{nn} $ and $Z$ are similarly
defined as in Sect. \ref{sec:2}. Then an optimal solution $x^*$ of
$\rm{(QF1QC)}_{\lambda^*}$, if exists, can be obtained from an optimal
solution of (\ref{SDP-tight2}) followed by the matrix rank-one
decomposition procedure. See \cite{SZ,D}.

\begin{rem}\label{comparision}
In Sect. \ref{sec:2}, our analysis showed that the difficulty of the
two-sided (P) lies mainly on the equality constrained problem
(\ref{cm222}), which can only be solved under the constraint
qualification Assumption B. Interestingly, we also showed that the
one-sided (P), namely (QF1QC), can be solved completely without any
condition. This leads to a conclusion that the equality constrained
version is more difficult than its counterpart with an inequality
constraint. They are not identical, even though for each $\lambda$
the two-sided $(P)_{\lambda}$ (\ref{P_lambda}); the equality version
of $(P)_{\lambda}$ (\ref{cm2}); and the inequality version of
$(P)_{\lambda}$ (\ref{QF1QC_lambda}) all possess a set of (similar
in format, but the difficulty in solving them might differ)
necessary and sufficient conditions that guarantee a strong duality,
respectively in (\cite{Pong}, Thm 2.3); (\cite{MJ}, Thm 3.2); and
(\cite{MJ}, Thm 3.3). We have some reasons for it. Geometrically,
even for a convex $g(x)$, $g(x)\le0$ leads to a convex set whereas
$g(x)=0$ not. Technically, the S-lemma is crucial in both cases. For
the inequality version $g(x)\le0$, the proof of the S-lemma must
rely on the Slater point. Fortunately, when $g(x)\le0$ fails the
Slater condition, it leads to a fact that $g(x)$ must be convex as
shown in Lemma \ref{No-Slater}. On the other hand, the equality
version $g(x)=0$ can not have a Slater point. It must rely on a more
sophisticate constraint qualification like Assumption B. Failing
that constraint qualification does not conclude any convexity of
$g(x)$.
\end{rem}

In the remaining part of this section, we shall discuss the (RQ)
problem (\ref{RQ}) as a special case of (QF1QC), where $g(x)$ in
(RQ) is convex ($B\succeq0$); no linear term ($d=0$) and
$\alpha=-\rho<0$. Suppose the rank of matrix $L$ is $r$ such that
$0<r\le n.$ Due to the special property of (RQ), Lemma \ref{B2} and
Theorem \ref{B4} can be combined to have a stronger version as
Theorem \ref{Xia} below. To prove it, we quote and use a result from
\cite{A}, which states: Consider a quadratic problem
\begin{align}\label{S} \min_{x\in \Bbb R^n}
\Big\{ g_0(x): g_i(x)\le 0, i=1,2,...,m\Big\}
\end{align} where
$g_i(x)=x^TQ_ix+2q_i^Tx+c_i, i=0,1,2,...,m$ are quadratic functions.
Then,
\begin{lemma}(\cite{A},Theorem 2)\label{L}  Suppose that $Q_1\succeq0$ and $Q_i=0$ for $
i=2,...,m.$ Then  if the objective function $g_0(x)$ is bounded from
below over the feasible set $X=\{x\in \Bbb R^n: g_i(x)\le 0,
i=1,...,m\}$ then problem (\ref{S}) attains its minimum.
\end{lemma}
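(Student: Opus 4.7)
The plan is to prove attainment by a minimizing-sequence argument, extracting a recession direction from any unbounded minimizing sequence, using that direction to replace the sequence by a better one on the boundary, and finally iterating via an induction on dimension.

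I would start by taking a minimizing sequence $\{x_k\} \subset X$ with $g_0(x_k) \to v^{*} := \inf_{x \in X} g_0(x) > -\infty$. If $\{x_k\}$ is bounded, a convergent subsequence exists, its limit is feasible by closedness of $X$, and continuity of $g_0$ finishes the proof. The nontrivial case is $\|x_k\| \to \infty$, in which (after passing to a subsequence) I may assume $d_k := x_k/\|x_k\| \to d$ for some unit vector $d$.

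Next I would extract the recession structure by dividing the inequalities $g_i(x_k) \le 0$ and the bounded values $g_0(x_k)$ by $\|x_k\|^2$ and $\|x_k\|$ and taking $k \to \infty$. The quadratic part of $g_1 \le 0$ gives $d^T Q_1 d \le 0$, which together with $Q_1 \succeq 0$ yields $Q_1 d = 0$; the linear order then yields $q_1^T d \le 0$ and $q_i^T d \le 0$ for $i \ge 2$; finally, boundedness of $g_0(x_k)$ against the blow-up $\|x_k\| \to \infty$ forces $d^T Q_0 d = 0$. Consequently for every $\bar x \in X$ the whole ray $\{\bar x + td : t \ge 0\}$ lies in $X$, and
\[
g_0(\bar x + td) = g_0(\bar x) + 2t\,(Q_0 \bar x + q_0)^T d
\]
is affine in $t$. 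Since $g_0$ is bounded below on $X$, this forces
$(Q_0 \bar x + q_0)^T d \ge 0$ for every $\bar x \in X$.

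With these two ingredients I would improve the sequence. Define $s_k := \sup\{s \ge 0 : x_k - sd \in X\}$. When $s_k < \infty$, set $\tilde x_k := x_k - s_k d \in X$; this point lies on the boundary because some $g_i$ becomes active, and the inequality above gives $g_0(\tilde x_k) = g_0(x_k) - 2 s_k (Q_0 x_k + q_0)^T d \le g_0(x_k)$, so $\{\tilde x_k\}$ is still minimizing. When $s_k = \infty$ for every $k$, the vectors $\pm d$ are both recession directions, i.e.\ $d$ lies in the lineality space of $X$; combining this with the sign condition above forces $(Q_0 \bar x + q_0)^T d = 0$ identically, so $g_0$ is invariant under $d$-translations, and I would project each $x_k$ onto $d^{\perp}$ to obtain a minimizing sequence with the same objective values in an $(n-1)$-dimensional affine slice of $X$.

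Either way the new minimizing sequence lives on a strictly lower-dimensional object (an active face, or the orthogonal slice). I would then induct on the ambient dimension, with the trivial $n=0$ base case. The main obstacle is verifying that the reduced problem still fits the hypotheses: activating a linear constraint $g_i = 0$ cleanly drops the ambient dimension by one and preserves the form (one convex quadratic, rest linear), and the lineality reduction does the same; but activating the quadric $g_1 = 0$ restricts the problem to a convex surface, not a subspace. I would handle this by using $Q_1 \succeq 0$ to parametrize $\{g_1 = 0\}$ (write $Q_1 = L L^T$, change coordinates so the kernel of $L^T$ is a coordinate block, and solve for one coordinate via the equation $g_1 = 0$, which is affine in that coordinate once $L^T x$ is fixed), producing an equivalent problem in smaller ambient dimension whose objective is again quadratic and whose constraints remain convex-quadratic-plus-affine. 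The induction terminates after at most $n$ reductions, yielding a bounded minimizing sequence and hence attainment.
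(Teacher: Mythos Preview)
The paper does not supply its own proof of this lemma; it is quoted from Luo and Zhang \cite{A}, Theorem~2. Your outline follows the standard Frank--Wolfe recession-direction strategy, which is the approach of that source, and the first stage---extracting the unit direction $d$, deducing $Q_1 d = 0$, $q_1^T d \le 0$, $q_i^T d \le 0$ for $i \ge 2$, $d^T Q_0 d = 0$, and the sign condition $(Q_0 \bar x + q_0)^T d \ge 0$ on $X$---is correct. The linear-activation and lineality-space branches of your reduction are also sound.

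There is, however, a genuine gap in the branch where the convex quadratic constraint $g_1$ becomes active. Your parametrization of $\{g_1 = 0\}$ does not yield a problem of the same form. In coordinates with $Q_1 = \mathrm{diag}(D,0)$, $D \succ 0$, and $x = (u,v)$, the equation $g_1 = 0$ reads
\[
u^T D u + 2(q_1')^T u + 2(q_1'')^T v + c_1 = 0,
\]
and solving for a kernel coordinate $v_1$ expresses it as a \emph{quadratic} function of $u$ (affine only in the remaining $v$-coordinates). Substituting this into a generic quadratic $g_0$ produces degree-four terms in $u$, and the formerly affine constraints $g_i$, $i \ge 2$, become quadratic in $u$ as well; the reduced problem is no longer of the required shape and the induction hypothesis does not apply. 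One useful observation toward a repair is that, because $Q_1 \succeq 0$, the recession cone of $X$ equals the \emph{polyhedral} cone $\{v : Q_1 v = 0,\ q_1^T v \le 0,\ q_i^T v \le 0\ (i \ge 2)\}$; it is therefore finitely generated, and the pull-back-and-reduce argument can be organised around its finitely many extreme rays rather than around faces of $X$. The original source \cite{A} closes this case without parametrizing the quadric; you should consult it for the details.
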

\begin{theorem} (The attainment of the (RQ) problem)\label{Xia}
For any well-defined (RQ), the following three statements are equivalent:\\
(i) $\lambda^*=v\rm{(RQ)}$ is attained.\\
(ii) The following semi-definite programming problem (D) has a
unique solution $(\lambda^*,\eta^*):$
\begin{align}\label{SDP}
(D)\ \ \ \max_{\lambda\in \Bbb R,\eta\ge 0}
 \ \left\{\lambda:\left(\begin{matrix}A_1&b_1\\b_1^T&c_1\end{matrix}\right)
-\lambda\left(\begin{matrix}A_2&b_2\\b_2^T&c_2\end{matrix}\right)
+\eta\left(\begin{matrix}L^TL&0\\0&-\rho\end{matrix}\right)\succeq
0\right\}.
\end{align}
(iii) $f(\lambda^*)=0.$
\end{theorem}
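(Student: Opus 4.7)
The plan is to close the triangle among (i), (ii), (iii). Theorem \ref{B4} already guarantees that the optimal \emph{value} of (D) equals $\lambda^*$, so the nontrivial content of (ii) is the \emph{uniqueness} of the maximizer $(\lambda^*, \eta^*)$. I intend to prove the cycle (i) $\Rightarrow$ (iii) $\Rightarrow$ (i) directly, and then show (i) $\Leftrightarrow$ (ii) by an SDP duality argument.

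For (i) $\Rightarrow$ (iii), Lemma \ref{B2} applied to the well-defined (RQ) forces $\lambda^*$ to be a root of $f$, giving $f(\lambda^*)=0$. For the reverse (iii) $\Rightarrow$ (i), I would exploit that (RQ) has a single quadratic constraint $\|Lx\|^2-\rho\le 0$ whose coefficient matrix $L^TL$ is positive semidefinite. This matches the hypothesis of Lemma \ref{L} (\cite{A}, Theorem 2) with $m=1$: if $f(\lambda^*)=0$, then $(QF1QC)_{\lambda^*}$ is bounded below (by $0$) over its feasible set, so Lemma \ref{L} supplies a minimizer $x^*$. Well-definedness ($f_2(x^*)>0$) then yields $f_1(x^*)/f_2(x^*)=\lambda^*$, so $x^*$ attains (RQ).

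For (i) $\Leftrightarrow$ (ii), I would use the strong-duality chain developed around (\ref{SDP-tight2}). The trivial Slater point $x=0$ is available since $\alpha=-\rho<0$, so (D) is the Lagrange dual of the tight SDP relaxation of $(QF1QC)_{\lambda^*}$, and both attain the common value $f(\lambda^*)$. When (i) holds, $f(\lambda^*)=0$ is attained by some $x^*$, yielding a rank-one primal optimal SDP solution; complementary slackness against this rank-one matrix pins $\eta^*$ down, and $\lambda^*$ is unique as the maximum value of (D). Conversely, I argue by contrapositive: if (i) fails, then $f(\lambda^*)>0$ by the already-proven (i) $\Leftrightarrow$ (iii), and the matrix $M(f_1-\lambda^* f_2)+\eta^* M(g)$ at the optimal dual multiplier has a strictly positive ``slack'' of size $f(\lambda^*)$ in its bottom-right entry. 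I would use this slack to build two distinct $\eta$'s feasible for (D) at $\lambda=\lambda^*$, violating uniqueness.

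The hardest step is exactly this last construction: translating non-attainment into genuine non-uniqueness of the SDP multiplier. Because $L$ has rank $r$ possibly strictly less than $n$, the matrix $L^TL$ is singular and small perturbations in $\eta$ need not be immediately absorbed by the PSD cone. The delicate task is to exhibit a direction along which $\eta$ can be moved while maintaining positive semidefiniteness, using both the extra slack $f(\lambda^*)>0$ and the positivity of $f_2$ on the feasible set; this is precisely the analysis carried out in \cite{AOS}, whose blueprint the proof should follow.
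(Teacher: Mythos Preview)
Your argument for (i) $\Leftrightarrow$ (iii) is exactly the paper's: Lemma~\ref{B2} gives (i) $\Rightarrow$ (iii), and for (iii) $\Rightarrow$ (i) the paper invokes Lemma~\ref{L} (with $Q_1=L^TL\succeq0$) to turn boundedness of $(\mathrm{RQ})_{\lambda^*}$ into attainment, just as you do. For (i) $\Leftrightarrow$ (ii) the paper does \emph{not} give an independent proof at all---it simply quotes \cite{AOS}, Theorem~3.3---so your plan to ``follow the blueprint of \cite{AOS}'' is precisely what the paper does, only more honestly stated as a citation; your complementary-slackness sketch for pinning down $\eta^*$ is not by itself sufficient (when $g(x^*)=0$ a rank-one primal does not determine $\eta^*$ through slackness alone), but since you already flag that the real work lives in \cite{AOS}, this is not a gap so much as an accurate assessment of where the content is.
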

\begin{proof}
The equivalence of (i) and (ii) was indeed proved in \cite{AOS}
Theorem 3.3. It suffices to show the equivalence between (i) and
(iii), which strengthens the general result Lemma \ref{B2} that
$\lambda^*=v\rm{(RQ)}$ is attained if and only if $f(\lambda^*)=0$
and
$$\rm{(RQ)}_{\lambda^*}:f(\lambda^*)= \inf_{x\in \Bbb
R^n}\left\{f_1(x)-\lambda^* f_2(x):||Lx||^2\le \rho \right\}
$$
is attained. However, if $f(\lambda^*)=0,$ $\rm{(RQ)}_{\lambda^*}$
is bounded below. Since $||Lx||^2\le \rho$ is convex, Lemma \ref{L}
assures that a bounded $\rm{(RQ)}_{\lambda^*}$ must be attained. In
other words, $f(\lambda^*)=0$ implies the attainment of problem
$(RQ)_{\lambda^*}$.
\end{proof}

\begin{rem}
The equivalence of (i) and (ii) in Theorem \ref{Xia} only holds for
(RQ) problem. It can not be extended to (QF1QC) in general as the
following Example \ref{BB} shows.
\end{rem}

\begin{exam}\label{BB}
Consider Example \ref{EX3} again as follows: $$\inf_{x\in \Bbb R^3}
\left\{\frac{x_1^2+1}{x_2^2+1}:g(x)=x_1^2+2x_3-1\le 0\right\}.$$
\end{exam}
It has been verified in Example \ref{EX3} that $\lambda^*=0$ and
$f(\lambda^*)=f(0)=1>0$, so the problem is unattainable. However,
the SDP problem (\ref{SDP}):
   $$\max\left\{\lambda:
\left(\begin{matrix}1+\eta&0&0&0\\0&-\lambda&0&0\\0&0&0&\eta\\
0&0&\eta&1-\lambda-\eta\end{matrix}\right)\succeq 0, \
\eta\ge0\right\}$$ has a unique solution $(\lambda^*,\eta^*)=(0,0).$

Some similar results of the attainment of the (RQ) problem were also
discussed in \cite{ASY} under stricter conditions. For comparison,
we quote the conditions and the results from \cite{ASY}.\\ {\bf
Assumption C (\cite{ASY})} There exists $\eta\ge 0$ such that
$$\left(\begin{matrix}A_2&b_2\\b_2^T&c_2\end{matrix}\right)
+\eta\left(\begin{matrix}L^TL&0\\0&-\rho\end{matrix}\right)\succ 0$$\\
{\bf Assumption D (\cite{ASY})} Either ($r=n$) or ($r<n$ and
$\lambda_{\text{min}}(M_1,M_2)<\lambda_{\text{min}}(F^TA_1F,F^TA_2F)$)
where
$$M_1=\left(\begin{matrix}F^TA_1F&F^Tb_1\\b_1^TF&c_1\end{matrix}\right),\
M_2=\left(\begin{matrix}F^TA_2F&F^Tb_2\\b_2^TF&c_2\end{matrix}\right)$$
with $F\in \Bbb R^{n\times (n-r)}$ a matrix whose columns form an
orthonormal basis for the null space of $L.$\\
\begin{theorem}(\cite{ASY})\label{Beck1}
 If Assumptions C and D are satisfied, the minimum of (RQ) is attained and $v\rm{(RQ)}\le
\lambda_{\min}(M_1, M_2).$
\end{theorem}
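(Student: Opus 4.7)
The claim splits into two parts: the bound $v(\mathrm{RQ})\le \lambda_{\min}(M_1,M_2)$ and the attainment. The plan is to dispatch the inequality first by exhibiting a feasible point that realizes $\lambda_{\min}(M_1,M_2)$, and then argue attainment via a minimizing-sequence/coercivity argument along $\mathrm{null}(L)$.

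For the inequality, I would first observe that Assumption C forces $F^TA_2F\succ 0$: plugging $y=(Fz,0)^T$ into the positive definite combination $M_2+\eta\,\mathrm{diag}(L^TL,-\rho)$, the constraint part equals $\|LFz\|^2=0$ (since $LF=0$), leaving $z^TF^TA_2Fz>0$ for every $z\ne 0$. Combined with well-definedness ($f_2(Fz)>0$ everywhere), the $(n-r+1)\times(n-r+1)$ matrix $M_2$ of Assumption D is itself positive definite, so
\[
\lambda_{\min}(M_1,M_2)=\min_{(z,t)\ne 0}\frac{y^TM_1y}{y^TM_2y}
\]
is attained on the unit sphere at some $\bar y=(\bar z,\bar t)$. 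If $\bar t=0$ the value would be $\ge\lambda_{\min}(F^TA_1F,F^TA_2F)$, contradicting the strict inequality in Assumption D; hence $\bar t\ne 0$, and $w=\bar z/\bar t$ realizes $\lambda_{\min}(M_1,M_2)=f_1(Fw)/f_2(Fw)$. Since $\|LFw\|^2=0\le\rho$, the point $Fw$ is feasible, yielding $v(\mathrm{RQ})\le\lambda_{\min}(M_1,M_2)$.

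For attainment I would split on $r$. When $r=n$, the feasible ellipsoid $\{x:\|Lx\|^2\le\rho\}$ is compact, $f_2>0$ on it by well-definedness, and Weierstrass delivers attainment of the continuous quotient. When $r<n$, let $\{x_k\}$ be a minimizing sequence and decompose orthogonally $x_k=u_k+Fz_k$ with $u_k\in\mathrm{range}(L^T)$. The bound $\rho\ge\|Lx_k\|^2=\|Lu_k\|^2$ keeps $u_k$ bounded because $L$ is injective on $\mathrm{range}(L^T)$. Assuming for contradiction $\|z_k\|\to\infty$, write $z_k=s_kw_k$ with $\|w_k\|=1$ and pass to a subsequence $w_k\to w$; the quadratic term then dominates and
\[
\frac{f_1(x_k)}{f_2(x_k)}\longrightarrow\frac{w^TF^TA_1Fw}{w^TF^TA_2Fw}\ge\lambda_{\min}(F^TA_1F,F^TA_2F),
\]
the denominator staying uniformly bounded below by a positive constant because $F^TA_2F\succ 0$. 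Assumption D says this limit strictly exceeds $\lambda_{\min}(M_1,M_2)\ge v(\mathrm{RQ})$, contradicting the minimizing property. Hence $\{z_k\}$ is bounded, a convergent subsequence gives $x_k\to x^*$ feasible, and continuity delivers $f_1(x^*)/f_2(x^*)=v(\mathrm{RQ})$.

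The main obstacle is the asymptotic ratio computation in the unbounded case: one must check that the linear and constant contributions from $u_k$, $b_i$, $c_i$ remain $O(s_k)$ while the leading quadratic term is $\Theta(s_k^2)$, so the ratio truly converges to the rescaled generalized eigenvalue. The positive definiteness $F^TA_2F\succ 0$ inherited from Assumption C is what prevents the denominator from oscillating toward zero and removes any need for a delicate subleading analysis. A secondary subtlety is verifying $M_2\succ 0$ so that $\lambda_{\min}(M_1,M_2)$ is a genuine smallest eigenvalue whose minimum on the unit sphere is attained; this too combines Assumption C with well-definedness.
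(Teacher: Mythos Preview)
The paper does not prove this theorem at all: it is quoted from \cite{ASY} purely for comparison with the authors' own Theorem~\ref{Xia}, and no argument for it is given in the present paper. So there is no ``paper's own proof'' against which to compare your proposal.

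Judged on its own merits, your outline is the natural one and is essentially correct. A couple of small points worth tightening. First, in the unbounded branch you should phrase the contradiction as ``$\{z_k\}$ is unbounded'' and then pass to a subsequence along which $\|z_k\|\to\infty$; since the full minimizing sequence has ratio tending to $v(\mathrm{RQ})$, so does every subsequence, and your limit computation then gives the contradiction. Second, your appeal to well-definedness to obtain $M_2\succ 0$ is legitimate but slightly circuitous: Assumption~C already yields it directly, since for any $(z,t)\ne 0$ one has
\[
(Fz,t)^T\Bigl[\begin{pmatrix}A_2&b_2\\ b_2^T&c_2\end{pmatrix}+\eta\begin{pmatrix}L^TL&0\\0&-\rho\end{pmatrix}\Bigr](Fz,t)
=(z,t)^TM_2(z,t)-\eta\rho t^2>0,
\]
because $LF=0$; hence $(z,t)^TM_2(z,t)>\eta\rho t^2\ge 0$ (and for $t=0$ this is $z^TF^TA_2Fz>0$). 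This simultaneously gives $F^TA_2F\succ 0$ and $M_2\succ 0$ without invoking well-definedness separately. Finally, in the $r=n$ case note that $F$ is the empty $n\times 0$ matrix, so $M_1=(c_1)$, $M_2=(c_2)$ and $\lambda_{\min}(M_1,M_2)=c_1/c_2=f_1(0)/f_2(0)$, with $0$ feasible; your argument still goes through but the generalized-eigenvector picture degenerates to this single point.
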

\begin{theorem}(\cite{ASY})\label{Beck2}
Let $n\ge 2$ and suppose that  Assumptions C and D  are satisfied.
Then
$$v(\rm{D})=\lambda^*,$$
where (D) is the semi-definite problem (\ref{SDP}).
\end{theorem}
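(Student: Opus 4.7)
The plan is to observe that Theorem \ref{Beck2} is an immediate specialization of Theorem \ref{B4} already proved in this paper, and in fact holds without needing Assumptions C, D, or the dimension restriction $n\ge 2$.

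First I would cast (RQ) into the (QF1QC) format. Setting $g(x)=x^TL^TLx-\rho$ places (RQ) inside (QF1QC) with $B=L^TL\succeq 0$, $d=0$, and $\alpha=-\rho$. The Slater condition $g(\bar x)<0$ is automatically satisfied by taking $\bar x=0$, since $g(0)=-\rho<0$. Hence every hypothesis of Theorem \ref{B4} is met free of charge, without appealing to Assumptions C and D.

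Next I would apply Theorem \ref{B4} verbatim. Substituting $B=L^TL$, $d=0$, $\alpha=-\rho$ into the SDP (\ref{F1}) collapses its matrix inequality into
\[
\begin{pmatrix}A_1-\lambda A_2+\eta L^TL & b_1-\lambda b_2\\ b_1^T-\lambda b_2^T & c_1-\lambda c_2-\eta\rho\end{pmatrix}\succeq 0,
\]
which after regrouping is precisely the matrix inequality defining $v(D)$ in (\ref{SDP}); the multiplier $\mu\ge 0$ of Theorem \ref{B4} is played by $\eta\ge 0$ in (\ref{SDP}). Therefore $\lambda^*=v(D)$.

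There is essentially no obstacle to this argument: the classical S-lemma under a primal Slater condition, already invoked in the proof of Theorem \ref{B4}, is the only substantive tool required. The point worth emphasizing is what the present viewpoint \emph{avoids}: Assumption C (a positive-definite matrix pencil, i.e.\ the dual Slater condition) and Assumption D (a spectral inequality between the full matrix pencil and its restriction to the null space of $L$) are imposed in \cite{ASY} to support their specific proof technique. Since our derivation leverages only the trivially available primal Slater point $x=0$, neither Assumption C nor Assumption D, nor the stipulation $n\ge 2$, is actually needed for the equality $v(D)=\lambda^*$ to hold.
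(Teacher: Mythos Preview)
Your proposal is correct and mirrors exactly what the paper does: the paper does not give an independent proof of Theorem~\ref{Beck2} (it is quoted from \cite{ASY}), but in the discussion immediately following it observes that ``our Theorem~\ref{B4} shows that the conclusion of Theorem~\ref{Beck2} is indeed true for a more general (QF1QC) problem without any condition,'' which is precisely your argument specialized to (RQ) with the trivially available Slater point $x=0$.
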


It was proved by Example 3.5 in \cite{AOS} that Assumption D is not
a necessary condition for the attainment of (RQ). Therefore, the
necessary and sufficient statements (i) and (ii) in Theorem
\ref{Xia} strictly generalize Theorem \ref{Beck1}. Since Example
\ref{BB} further shows that the equivalence of (i) and (ii) in
Theorem \ref{Xia} does not hold for (QF1QC), our Lemma \ref{B2} thus
improves Theorem \ref{Beck1} sharply. Secondly, our Theorem \ref{B4}
shows that the conclusion of Theorem \ref{Beck2} is indeed true for
a more general (QF1QC) problem without any condition.

\section{On  well-definedness of (QF1QC)}
\label{sec:4} In this section we characterize the well-definedness
property for the problem (QF1QC) (i.e. $f_2(x)>0$ on $X=\{x\in \Bbb
R^n| g(x)\le0\}$). To this end, we assume the primal Slater
condition and that the two matrices $A_2$ and $B$ are simultaneously
diagonalizable via congruence (SDC). Then, there exists a
nonsingular matrix $C$ such that both matrices $C^TA_2C$ and $C^TBC$
are diagonal. It has been argued in \cite{Sheu} that, under the
(SDC) condition, the following quadratic problem
\begin{equation} \label{M}
\inf_{x\in \Bbb R^n}\left\{f_2(x): g(x)\le 0\right\}
\end{equation}
would be either unbounded below, or attained, or eventually reduced
to an unconstrained quadratic problem, but can never be
unattainable.

However, if (\ref{M}) is indeed an unconstrained problem, it is
attainable if and only if $A_2\succeq0$ and the vector $b_2$ lies in
the range space of $A_2$; or it must be unbounded below. In other
words, under the SDC condition, (\ref{M}) can never be unattainable
if bounded from below. It leads to a similar, but slightly more
general result than Lemma \ref{L}:
\begin{lemma}\label{SDC}
If $A_2$ and $B$ are SDC, the quadratic problem (\ref{M})
is either attained or unbounded below.
\end{lemma}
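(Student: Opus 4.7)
The plan is to reduce the statement to a case analysis using the SDC hypothesis together with the result already cited from \cite{Sheu}, and then to close the remaining ``reduced to unconstrained'' case by invoking the standard dichotomy for unconstrained quadratic minimization.

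First, I would perform the congruence change of variable $x = Cy$, where $C$ is the nonsingular matrix simultaneously diagonalizing $A_2$ and $B$. In the new coordinates, (\ref{M}) becomes a quadratic minimization in which both the objective's quadratic part $C^T A_2 C$ and the constraint's quadratic part $C^T B C$ are diagonal. Since the change of variable is a bijection of $\Bbb R^n$ onto itself, the attainment, boundedness, and unbounded-below status of (\ref{M}) are all preserved. So it suffices to prove the lemma assuming $A_2$ and $B$ are themselves diagonal.

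Next I would invoke the result already referenced from \cite{Sheu}: under the SDC condition, problem (\ref{M}) is either (a) unbounded below, (b) attained, or (c) effectively reduced to an unconstrained quadratic minimization (the constraint $g(x)\le 0$ being either inactive at every candidate minimizer or absorbed by splitting the diagonal problem into independent one-dimensional subproblems, as described in \cite{Sheu}). In cases (a) and (b) the conclusion of the lemma holds trivially, so the entire content of the lemma is to rule out unattainability in case (c).

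The last step, and the one that carries the substance, is to show that an unconstrained quadratic $\inf_{x\in\Bbb R^n}\{x^T A_2 x + 2 b_2^T x + c_2\}$ is either attained or unbounded below; it can never be bounded below yet unattained. I would argue this by the standard dichotomy: if $A_2$ has a negative eigenvalue, moving along the corresponding eigenvector drives the objective to $-\infty$; if $A_2\succeq 0$ and $b_2\in\mathcal{R}(A_2)$, then $b_2 = A_2 z$ for some $z$ and $x = -z$ is a global minimizer; finally if $A_2\succeq 0$ but $b_2\notin\mathcal{R}(A_2)$, then decomposing $b_2$ into its component in the null space of $A_2$ and moving along that component produces an unbounded-below direction. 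Thus the ``bounded but unattained'' scenario is excluded in case (c) as well, completing the proof.

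The only nontrivial ingredient is the reduction result of \cite{Sheu}, which is already quoted in the paragraph preceding the lemma; once that is available, the proof is essentially the three-way dichotomy for unconstrained quadratics, so I do not expect a genuine obstacle. The main point to be careful about is to verify that the ``reduced unconstrained'' problem in case (c) really does inherit the form $x^T A_2 x + 2b_2^T x + c_2$ (possibly restricted to a coordinate subspace after the SDC diagonalization), so that the dichotomy applies directly.
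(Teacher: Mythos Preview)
Your proposal is correct and follows essentially the same approach as the paper: invoke the trichotomy from \cite{Sheu} (unbounded below, attained, or reduced to an unconstrained quadratic) and then close the unconstrained case by the standard dichotomy that an unconstrained quadratic is attained if and only if $A_2\succeq0$ and $b_2\in\mathcal{R}(A_2)$, and is unbounded below otherwise. The paper's argument is in fact given in the paragraph immediately preceding the lemma and is slightly terser than yours; your explicit change of variable and your caveat about the form of the reduced problem are reasonable elaborations but not additional ideas.
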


\begin{rem}
The question as to ``Simultaneous diagonalization via congruence of
a finite collection of symmetric matrices'' was proposed to be the
twelfth open problem in \cite{Bap}. Even for just two matrices, the
complexity to check whether or not they are indeed SDC remains
unanswered.
\end{rem}


The well-definedness of (QF1QC) can be checked computationally as
follows.
\begin{theorem}\label{B3}
Suppose that $A_2$ and $B$ are SDC. The following three statements are equivalent under
the primal Slater condition:\\
(a) Problem (QF1QC) is well-defined. That is, $f_2(x)>0$ on $X=\{x\in \Bbb R^n| g(x)\le0\}.$\\
(b) There exist $\delta>0, \eta\ge 0$ such that
\begin{equation}\label{D1}
\left(\begin{matrix}A_2&b_2\\
b_2^T&c_2-\delta\end{matrix}\right)+
\eta\left(\begin{matrix}B&d\\
d^T&\alpha\end{matrix}\right)\succeq0.
\end{equation}
(c) There exists $\delta>0$ such that \[\inf_{x\in \Bbb R^n}
\{f_2(x):g(x)\le 0\}\ge \delta>0.
\]
\end{theorem}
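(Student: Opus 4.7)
The plan is to prove the equivalences in a three-link cycle $(b)\Rightarrow(c)\Rightarrow(a)\Rightarrow(b)$. Two of the three links are essentially mechanical, while the step $(a)\Rightarrow(b)$ is where the SDC hypothesis becomes indispensable.

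First I would dispatch $(b)\Rightarrow(c)$ directly. The matrix inequality (\ref{D1}) is precisely the statement that
\[
f_2(x)-\delta+\eta g(x)\ge 0,\qquad \forall x\in\Bbb R^n,
\]
for some $\delta>0$ and $\eta\ge 0$. Restricting to $x\in X=\{g(x)\le 0\}$, the term $-\eta g(x)\ge 0$ since $\eta\ge 0$, so $f_2(x)\ge \delta$ on $X$. Hence $\inf_{x\in X} f_2(x)\ge \delta>0$. The implication $(c)\Rightarrow(a)$ is immediate from the definition of well-definedness.

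Next I would establish $(c)\Rightarrow(b)$ by the standard S-lemma. Under the primal Slater condition, the unsolvability of the system $f_2(x)-\delta<0,\ g(x)\le 0$ (which follows from $f_2(x)\ge\delta$ on $X$) is equivalent, by the classical S-lemma, to the existence of a multiplier $\eta\ge 0$ with
\[
f_2(x)-\delta+\eta g(x)\ge 0,\qquad \forall x\in\Bbb R^n,
\]
and this is just (\ref{D1}) written in matrix form. Both $(b)\Rightarrow(c)$ and $(c)\Rightarrow(b)$ are independent of the SDC assumption.

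The main obstacle is $(a)\Rightarrow(c)$, which is precisely where SDC must be used. Consider the auxiliary problem
\[
\inf_{x\in\Bbb R^n}\{f_2(x):g(x)\le 0\}.
\]
Because $A_2$ and $B$ are SDC, Lemma \ref{SDC} applies and tells us that this problem is either attained or unbounded below. If it were unbounded below, there would be a sequence $x_k\in X$ with $f_2(x_k)\to-\infty$, flatly contradicting $(a)$. So the infimum is attained at some $x^\dagger\in X$, and by $(a)$ we have $f_2(x^\dagger)>0$. Taking $\delta=f_2(x^\dagger)>0$ gives $(c)$. The whole subtlety is ruling out the ``bounded below but unattainable'' scenario, and this is exactly what SDC buys us via Lemma \ref{SDC}; without it, $f_2$ could approach a positive infimum without reaching it, leaving the gap between $(a)$ and $(c)$ unbridged. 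With this step in hand, the cycle closes and the three statements are equivalent.
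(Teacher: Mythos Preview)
Your proof is correct and follows essentially the same route as the paper: the paper proves the cycle $(c)\Rightarrow(a)\Rightarrow(b)\Rightarrow(c)$, merging your $(a)\Rightarrow(c)$ and $(c)\Rightarrow(b)$ into a single step $(a)\Rightarrow(b)$, but the ingredients---Lemma~\ref{SDC} for attainment, then the classical S-lemma under the Slater condition---are identical. One small quibble with your closing remark: the danger without SDC is not ``approaching a positive infimum without reaching it'' (a positive infimum already yields $(c)$ regardless of attainment), but rather having $f_2>0$ on $X$ while $\inf_X f_2=0$.
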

\begin{proof}
We observe that $(c)$ trivially implies$ (a).$ It remains to show that
$(a)$ implies $(b)$ and $(b)$
implies $(c).$\\
$(a) \Rightarrow (b).$ Since (QF1QC) is well-defined, $f_2(x)$ is
bounded from below by 0 over $\{x\in \Bbb R^n: g(x)\le 0\}$ and thus
Problem (\ref{M}) attains its minimum, say at $x^*$, by Lemma
\ref{SDC}. Let $\delta=f_2(x^*)>0.$ The following system
$$\begin{matrix}f_2(x)<\delta\\ g(x)\le 0\end{matrix} $$
is hence unsolvable. By S-Lemma, there exists $\eta\ge0$ such that
$$f_2(x)-\delta+\eta g(x)\ge 0, \forall x\in \Bbb R^n,$$
which is exactly (\ref{D1}).\\
$(b)\Rightarrow (c).$ Suppose that there exist $\delta>0, \eta\ge0$
such that the matrix inequality (\ref{D1}) holds. Since $g(x)\le 0$
and $\eta\ge0,$ we have
$$\inf_{x\in \Bbb R^n}\{f_2(x): g(x)\le 0\}\ge
\inf_{x\in \Bbb R^n}\{f_2(x)+\eta g(x): g(x)\le 0 \}.$$ Moreover,
the matrix inequality (\ref{D1}) is equivalent to
$$f_2(x)-\delta+\eta g(x)\ge 0, \forall x\in \Bbb
R^n$$ so that
$$\inf_{x\in \Bbb R^n}\{f_2(x)+\eta g(x):~g(x)\le 0\}\ge \delta>0.$$ Thus $$\inf_{x\in \Bbb R^n}\{f_2(x):
g(x)\le0\}\ge \delta>0$$ which completes the proof.
\end{proof}
\begin{rem}
The assumption that $A_2$ and $B$ are SDC in Theorem \ref{B3} can
not be relaxed as the following example explains.
\end{rem}
\begin{exam}
 $\inf_{x\in \Bbb R^2}\left\{\frac{x_1^2+x_2^2+5}{x_1^2}:1-2x_1x_2\le 0\right\}.$
\end{exam}
We note here that the matrices $A_2=\left(\begin{matrix}1&0\\
0&0\end{matrix}\right)$ and $B=\left(\begin{matrix}0&-1\\
-1&0\end{matrix}\right)$ are not SDC. However, the problem is
well-defined since vector $(0,x_2)^T$ is not in the feasible set. We
can also verify that the matrix
$$\left(\begin{matrix}A_2&b_2\\
b_2^T&c_2-\delta\end{matrix}\right)+
\eta\left(\begin{matrix}B&d\\
d^T&\alpha\end{matrix}\right)=\left(\begin{matrix}1&-\eta&0\\
-\eta&0&0\\ 0&0&\eta-\delta\end{matrix}\right)$$ is not positive
semi-definite for any $ \eta\ge 0, \delta>0.$ The statement (b) in
Theorem \ref{B3} fails.

Finally, it was proved in \cite{ASY} that Assumption C implies the
well-definedness of Problem (RQ). The following example indicates
that (\ref{D1}) in Theorem \ref{B3} is more general than Assumption
C.

\begin{exam}\label{EX9}
\begin{align}\label{wdf}
\ \inf_{x\in \Bbb R^3}
\Big\{\frac{x_1^2+x_2^2+x_3}{x_1^2+1}:x_1^2+x_2^2\le 1\Big\}
\end{align}
\end{exam}
In can be observed that (\ref{wdf}) is an (RQ) problem which
satisfies the SDC condition. It is well-defined so that condition
(\ref{D1}) is satisfied. However, there is no
 $\eta\ge0$ satisfying the matrix inequality in Assumption C.

\section{Conclusion and Further Research}
\label{sec:5} In this paper, we study a quadratic fractional
programming problem (P) over the intersection of an upper and a
lower level set of a quadratic function $g(x)$. In contrast to the
traditional Dinkelbach iterative method, we solve (P) by
establishing the equivalence between the parametric form
$(P)_{\lambda^*}$ and the related SDP formulations. Therefore,
computational efficiency for (P) is greatly improved over the
tedious and slow convergence of the repeated iterations.

The problem (P) is posed intensionally over the two-sided constraint
set in order to also shed some light on the old existing quadratic
programming with more than one quadratic constraint. However, our
study shows that the major difficulty of (P) lies in solving a
quadratic fractional minimization problem subject to a quadratic
{\it equality} constraint. The future research will be naturally to
obtain a stronger version of the extended S-Lemma and study its
geometric insights.

\end{document}